\declaretheorem{theorem}
\declaretheorem{lemma}
\declaretheorem{corollary}
\declaretheorem{proposition}
\declaretheorem{definition}
\declaretheorem{question}
\declaretheorem{remark}
\declaretheorem{claim}
\declaretheorem[name=Acknowledgements,numbered=no]{ack}
\newcommand{\R}{\mathbb{R}}
\newcommand{\Z}{\mathbb{Z}}
\newcommand{\N}{\mathbb{N}}
\def\phi{\varphi}
\def\R{{\mathbb R}}
\def\N{{\mathbb N}}
\def\Z{{\mathbb Z}}
\def\le{\leqslant}
\def\ge{\geqslant}
\begin{document}

\title[Characterization of uniform hyperbolicity]{Characterization of uniform hyperbolicity for fiber-bunched cocycles}
\date{June, 2018}

\author[R. Velozo]{Renato Velozo} \address{Facultad de Matem\'aticas,
Pontificia Universidad Cat\'olica de Chile (PUC), Avenida Vicu\~na Mackenna 4860, Santiago, Chile}
\email{ravelozo@mat.uc.cl}

\begin{abstract} We prove a new characterization of uniform hyperbolicity for fiber-bunched cocycles. Specifically, we show that the existence of a uniform gap between the Lyapunov exponents of a fiber-bunched $SL(2,\R)$-cocycle defined over a subshift of finite type or an Anosov diffeomorphism implies uniform hyperbolicity. In addition, we construct an $\alpha$-H\"{o}lder cocycle which has uniform gap between the Lyapunov exponents, however it is not uniformly hyperbolic.
\end{abstract}

\maketitle

\section{Introduction}

A linear cocycle is a pair $(T,A)$ where $T:X\to X$ is a homeomorphism defined on a compact metric space $X$ and $A:X\to GL(d,\R)$ is a continuous function. In particular, a $SL(2,\R)$-cocycle is a linear cocycle where $A$ takes values on $SL(2,\R)$. Besides, we are going to use the following notation $$A^n(x):=A(T^{n-1}x)A(T^{n-2}x)\dots A(Tx)A(x),$$ $$A^{-n}(x):=A(T^{-n}x)^{-1}A(T^{-n+1}x)^{-1}\dots A(T^{-2}x)^{-1}A(T^{-1}x)^{-1},$$ and $A^0(x):=I$ for every $x\in X$ and $n>0$. Along these notes we are going to deal specifically with uniformly hyperbolic cocycles. 
\begin{definition}
A $SL(2,\R)$-cocycle $(T,A)$ is called \emph{uniformly hyperbolic} if there are constants $c>0$ and $0<\lambda<1$ such that for every $x\in X$ there exist transverse one-dimensional spaces $E_x^s$ and $E_x^u$ in $\R^2$ such that 
\begin{enumerate}
\item $A(x)E_x^s=E_{T(x)}^s$ and $A(x)E_x^u=E_{T(x)}^u$,
\item $\|A^n(x)v^s\|\le c\lambda^n \|v^s\|$ and $\|A^{-n}(x)v^u\|\le c\lambda^n \|v^u\|$,
\end{enumerate}
for every $x\in X$, $v^s\in E_x^s$, $v^u\in E_x^u$ and $n \ge 1$. 
\end{definition}

Here $\|\cdot\|$ denotes the Euclidean norm. For $SL(2,\R)$-cocycles there is a well known characterization of uniform hyperbolicity proved by J.-C. Yoccoz in \cite{Y} (see \cite{BG} and \cite{Z} for related results).

\begin{proposition} \label{prop1}
A $SL(2,\R)$-cocycle $(T,A)$ is uniformly hyperbolic if and only if there are constants $c>0$ and $\tau>0$ such that $$\|A^n(x)\|\ge ce^{\tau n}, \quad \textrm{for all $n \ge 0$ and $x \in X$}.$$ 
\end{proposition}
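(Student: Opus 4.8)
The ``only if'' implication I would handle in one line: assuming uniform hyperbolicity, fix $x$ and a unit vector $v\in E_x^u$; since $A^{-n}(T^nx)=A^n(x)^{-1}$ and $A^n(x)v\in E_{T^nx}^u$, property (2) at $T^nx$ gives $1=\|v\|=\|A^{-n}(T^nx)A^n(x)v\|\le c\lambda^n\|A^n(x)v\|\le c\lambda^n\|A^n(x)\|$, so $\|A^n(x)\|\ge c^{-1}\lambda^{-n}$ and one takes $\tau=-\log\lambda>0$.

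For the converse, assume $\|A^n(x)\|\ge ce^{\tau n}$ for all $n\ge 0$, $x\in X$. First I would symmetrize the hypothesis: $A^{-n}(x)=A^n(T^{-n}x)^{-1}$ and a matrix of $SL(2,\R)$ has the same norm as its inverse, so $\|A^{-n}(x)\|\ge ce^{\tau n}$ as well; by compactness set $K:=\sup_x\max\{\|A(x)\|,\|A(x)^{-1}\|\}<\infty$. For $M\in SL(2,\R)$ with $\|M\|>1$ write $e^-(M)$ for the most-contracted unit singular direction (so $\|Me^-(M)\|=\|M\|^{-1}$) and $[\,\cdot\,]$ for its class in $\R P^1$. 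The elementary ingredient is that left multiplication barely moves this direction: $d_{\R P^1}([e^-(BM)],[e^-(M)])\le C\|B\|\|B^{-1}\|\|M\|^{-2}$, which follows from $\|BMe^-(M)\|\le\|B\|\|M\|^{-1}$, $\|BMe^+(M)\|\ge\|B^{-1}\|^{-1}\|M\|$ and the fact that a unit vector whose image is much shorter than that of its orthogonal complement lies close to $[e^-]$. Applying this with $M=A^n(x)$, $B=A(T^nx)$ gives $d([e^-(A^{n+1}(x))],[e^-(A^n(x))])\le C' e^{-2\tau n}$ uniformly in $x$, so $[e^-(A^n(x))]$ is uniformly Cauchy; I set $E_x^s:=\lim_n[e^-(A^n(x))]$ and, applying the same to the inverse cocycle $\tilde A(y)=A(T^{-1}y)^{-1}$ (whose $n$-th product is $A^{-n}(x)$ and which again satisfies the hypothesis), $E_x^u:=\lim_n[e^-(A^{-n}(x))]$. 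Uniform convergence of continuous fields makes $E^s,E^u$ continuous, and $A$-invariance, $A(x)E_x^s=E_{Tx}^s$, follows from the identity $A^{n+1}(x)=A^n(Tx)A(x)$ together with a short singular-value estimate giving $d([e^-(A^n(Tx))],[A(x)e^-(A^{n+1}(x))])\to 0$ (and symmetrically for $E^u$).

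The heart of the matter --- and the step I expect to be the main obstacle --- is transversality: $E_x^s\ne E_x^u$ for every $x$. The naive attempt (``$v\in E_x^s$ is $e^{-2\tau n}$-close to $[e^-(A^n(x))]$, hence $\|A^n(x)v\|$ is small'') breaks down because $\|A^n(x)\|$ may a priori grow faster than $e^{2\tau n}$, so I would bring in the invariant measures. The set $Z:=\{x:E_x^s=E_x^u\}$ is closed (by continuity of $E^s,E^u$) and $T$-invariant, so if $Z\ne\es$ it supports an ergodic $T$-invariant measure $\mu$, over $\operatorname{supp}\mu$ of which $L:=E^s=E^u$ is a continuous $A$-invariant line field. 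By Birkhoff, $\frac1n\log\|A^n(y)|_{L_y}\|\to\chi_L$ for $\mu$-a.e.\ $y$; by Oseledets and $\det A^n(x)=1$ the exponents of $(T,A)$ with respect to $\mu$ are $\chi:=\lim\frac1n\log\|A^n(y)\|\ge\tau>0$ and $-\chi<\chi$, so every direction has exponent in $\{\chi,-\chi\}$ and in particular $\chi_L\in\{\chi,-\chi\}$. But the closeness of $L_y$ to $[e^-(A^n(y))]$ forces $\chi_L\le\chi-2\tau<\chi$, hence $\chi_L=-\chi$, i.e.\ $L$ is the Oseledets stable direction $\mu$-a.e. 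Running the identical argument for the inverse cocycle $(T^{-1},\tilde A)$, whose $E^s$-field equals $L$, identifies $L$ with that cocycle's stable direction, i.e.\ the Oseledets unstable direction of $(T,A)$, $\mu$-a.e.; then the two Oseledets directions coincide, contradicting $\chi>0$. Hence $Z=\es$.

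Finally I would upgrade to uniform constants. By transversality, continuity and compactness, $\alpha_0:=\inf_x\angle(E_x^s,E_x^u)>0$. Fix $x$ and unit $v^s\in E_x^s$, $w^u\in E_x^u$; since $\det A^n(x)=1$ and $\angle(A^n(x)v^s,A^n(x)w^u)=\angle(E_{T^nx}^s,E_{T^nx}^u)\ge\alpha_0$ by invariance, $\|A^n(x)v^s\|\,\|A^n(x)w^u\|\le 1/\sin\alpha_0$. As $v^s$ is $C'e^{-2\tau n}$-close to $[e^-(A^n(x))]$, for $n$ past an $x$-independent threshold $w^u$ makes angle $\ge\alpha_0/2$ with $[e^-(A^n(x))]$, so $\|A^n(x)w^u\|\ge\sin(\alpha_0/2)\|A^n(x)\|\ge c\sin(\alpha_0/2)e^{\tau n}$; combining, $\|A^n(x)v^s\|\le(c\sin\alpha_0\sin(\alpha_0/2))^{-1}e^{-\tau n}$, the finitely many small $n$ being absorbed in the constant, and symmetrically for $A^{-n}$ on $E^u$ via the inverse cocycle. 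This produces the splitting $E^s\oplus E^u$ with the exponential estimates of the definition, i.e.\ uniform hyperbolicity with $\lambda=e^{-\tau}$.
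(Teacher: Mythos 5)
Your argument is correct, but note that the paper itself contains no proof of this proposition to compare against: it is quoted from Yoccoz \cite{Y}. Your ``only if'' computation via $A^{-n}(T^nx)=A^n(x)^{-1}$ is the standard one, and your converse follows the classical route for this characterization: the uniform growth $\|A^n(x)\|\ge ce^{\tau n}$, together with the estimate that left multiplication by a matrix from a compact set moves the most-contracted singular direction by at most a constant times $\|M\|^{-2}$, makes $[e^-(A^n(x))]$ uniformly Cauchy, yielding continuous invariant fields $E^s$ (and $E^u$ from the inverse cocycle); transversality; and then the uniform contraction estimates from $\det A^n(x)=1$ plus a compactness lower bound on the angle. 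Two points deserve polish but are not gaps. First, in the transversality step the correct bound is $\chi_L\le\max\{-\chi,\ \chi-2\tau\}$ rather than $\chi-2\tau$ (writing a unit vector of $L_y$ as $\alpha e^-+\beta e^+$ with $|\beta|\le C e^{-2\tau n}$ gives $\|A^n(y)v\|\le\|A^n(y)\|^{-1}+Ce^{-2\tau n}\|A^n(y)\|$); since both terms of the maximum are strictly less than $\chi\ge\tau$, your conclusion $\chi_L=-\chi$ stands. Second, identifying $L$ simultaneously with the forward-stable and the backward-stable Oseledets directions, and then contradicting $E^+\neq E^-$, uses the invertible (two-sided) Oseledets theorem, which is stronger than the one-sided statement quoted in the paper as Theorem 5; you should cite or state that version explicitly. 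Your measure-theoretic detour for transversality (supporting an ergodic measure on the closed invariant set $Z=\{x: E^s_x=E^u_x\}$ and comparing Birkhoff and Oseledets exponents) is a legitimate alternative to the more elementary direct estimates in Yoccoz's treatment: it costs the ergodic machinery, but it cleanly avoids the quantitative angle bookkeeping that a purely deterministic proof of $E^s_x\neq E^u_x$ requires.
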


In the previous proposition, $\|\cdot\|$ is the operator norm induced by the Euclidean norm. We proceed to define another main concept in these notes: Lyapunov exponents.

\begin{definition}
Let $(T,A)$ be a linear cocycle. We define the \emph{upper and lower Lyapunov exponents at a point} $x\in X$ respectively by  $$\lambda_+(x):= \lim_{n \to \infty} \frac{1}{n} \log \|A^n(x)\| \quad \textrm{and} \quad \lambda_-(x):= \lim_{n \to \infty} \frac{1}{n} \log \|A^{n}(x)^{-1}\|^{-1},$$ whenever the limits exist. 
\end{definition}

It follows from Kingman's subadditive ergodic theorem that these limits exist for every $x \in \mathcal{R}$, where $\mathcal{R}\subset X$ is a Borel set such that $\mu(\mathcal{R})=1$ for any $T$-invariant probability measure $\mu$. The elements of $\mathcal{R}$ are called \emph{regular points}. By elementary linear algebra, every periodic point is regular. For more details about Lyapunov exponents properties, see \cite{AB}. 
\\

By Proposition \ref{prop1}, for every $SL(2,\R)$-cocycle which is uniformly hyperbolic, there is a constant $\tau>0$ such that $$\lambda_+(x)= \lim_{n \to \infty} \frac{1}{n} \log \|A^{n}(x)\|\ge \tau>0, \quad \textrm{for every $x \in \mathcal{R}$.}$$ In addition, since $\|M\|=\|M^{-1}\|$ for every $M$ in $SL(2,\R)$, we have $\lambda_+=-\lambda_-$. Hence, there is a uniform gap of $2\tau$ between the Lyapunov exponents, more precisely $$\lambda_+(x)- \lambda_{-}(x)\ge 2\tau,  \quad \textrm{for every $x \in \mathcal{R}$}.$$ In the following, we are going to show that this property characterizes uniform hyperbolicity for an important class of cocycles. Before to state the result, we recall a basic definition.

\begin{definition}
A Borel set $\mathcal{S}\subset X$ is called a \emph{full probability set} if $\mu(\mathcal{S})=1$ for every $T$-invariant probability measure $\mu$.
\end{definition}

In particular the set $\mathcal{R}$ of regular points is of full probability. Furthermore, each periodic point $p=T^np$ belongs to every Borel set $\mathcal{S}\subset X$ of full probability. In fact, for the $T$-invariant measure $\mu_p$ defined by $$\mu_p=\dfrac{\delta_p+\delta_{Tp}+\dots+\delta_{T^{n-1}p}}{n},$$ the periodic point $p$ has positive measure. Now, we proceed to state the main result of these notes.

\begin{theorem} \label{theo1}
Let $(T,A)$ be a $SL(2,\R)$-cocycle defined over a transitive subshift of finite type or a transitive Anosov diffeomorphism. Suppose the cocycle satisfies the fiber-bunching condition, and there is a constant $\tau>0$ and a full probability set $\mathcal{S}\subset \mathcal{R}$ such that $$\lambda_+(x)\ge \tau \quad \textrm{for every $x \in \mathcal{S}$}.$$ Then the cocycle $(T,A)$ is uniformly hyperbolic. 
\end{theorem}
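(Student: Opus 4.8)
The plan is to verify Yoccoz's criterion from Proposition~\ref{prop1}, namely to produce constants $c>0$ and $\tau'>0$ with $\|A^n(x)\|\ge ce^{\tau' n}$ for all $n\ge 0$ and all $x\in X$.

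First I would transfer the hypothesis to periodic orbits. Every periodic point $p$ lies in the full probability set $\mathcal{S}\subset\mathcal{R}$, so $\lambda_+(p)$ exists and is $\ge\tau$; writing $m$ for the period of $p$, this says precisely that the return matrix $A^m(p)\in SL(2,\R)$ has spectral radius $\ge e^{\tau m}$, i.e.\ it is hyperbolic with an expanding eigenline $E^u_p$ (eigenvalue of modulus $\ge e^{\tau m}$) and a complementary contracting eigenline $E^s_p$. Thus over the dense set of periodic orbits we already have \emph{uniformly} hyperbolic data, not merely positive exponents.

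Next I would invoke the fiber-bunching hypothesis. Over a transitive subshift of finite type or a transitive Anosov diffeomorphism it yields stable and unstable holonomies $H^{s/u}_{x,y}$, defined for $y$ on the local stable (resp.\ unstable) set of $x$, equivariant with the cocycle, $\alpha$-H\"older in $(x,y)$, and — crucially — uniformly bounded with uniformly bounded inverses over the compact base. I would use these holonomies to transport the eigenlines $E^{s/u}_p$ off the periodic orbits along stable and unstable leaves, and combine this with transitivity: density of periodic orbits and of stable/unstable manifolds, together with the Anosov closing lemma, lets one shadow the orbit of an arbitrary $x$, on a fixed small scale, by a periodic orbit. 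The estimate I would actually carry out is: for each $x$ and each large $n$, pick a periodic point $p$ shadowing $x,Tx,\dots,T^nx$ on that fixed scale, and compare $A^n(x)$ with the corresponding return matrix of $p$ through a bounded product of holonomies, obtaining $\|A^n(x)\|\ge C^{-1}e^{\tau n}$ with $C$ independent of $x$ and $n$; Proposition~\ref{prop1} then closes the argument. Equivalently, the transported eigenlines assemble into a continuous, $A$-invariant, everywhere transverse splitting $\R^2=E^s\oplus E^u$ with $E^u$ expanded at a uniform rate on the dense set of periodic orbits, hence — by continuity — everywhere; such a splitting is dominated, and for $SL(2,\R)$-cocycles domination forces uniform hyperbolicity since $\det\equiv1$ converts uniform relative expansion into uniform contraction of $E^s$.

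The main obstacle is precisely the uniformity in the previous paragraph: controlling the distortion of the cocycle along a shadowing orbit, uniformly in the length of the shadowing and in the base point. Mere $\alpha$-H\"older continuity of $A$ does \emph{not} suffice here — the paper's own $\alpha$-H\"older example has a uniform Lyapunov gap and yet is not uniformly hyperbolic — and it is exactly the fiber-bunching inequality, bounding $\|A(x)\|\,\|A(x)^{-1}\|$ by a negative power of the hyperbolicity rate of $T$, that makes the holonomy limits converge, makes them $\alpha$-H\"older, and bounds the accumulated distortion along a shadowing orbit by a constant depending on neither its length nor its base point. The remaining work — ruling out spurious invariant directions arising from non-hyperbolic-looking configurations, and gluing the line fields consistently across the dense family of periodic leaves — is bookkeeping once the uniform holonomy/shadowing comparison is in hand.
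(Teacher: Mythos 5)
Your proposal is essentially the paper's own argument: the paper likewise reduces to Yoccoz's criterion (Proposition \ref{prop1}), uses that every periodic point lies in $\mathcal{S}$ so its return matrix has norm at least $e^{\tau m}$, and compares an arbitrary orbit segment with a shadowing periodic orbit (closing lemma/irreducibility plus the local product point $y=[p,x]$) through a uniformly bounded product of the stable/unstable holonomies furnished by fiber-bunching --- the only difference being that the paper runs this comparison in the contrapositive (a segment with subexponential growth forces a periodic point with $\lambda_+(p)<\tau$) rather than directly. Your alternative route via gluing transported eigenlines into a continuous dominated splitting is not needed and is the only part whose ``bookkeeping'' would require genuine extra work.
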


See Section \ref{section2} for the definition of fiber bunched linear cocycles. Note that \cite{C} proved a similar characterization of uniform hyperbolicity but assuming a stronger hypothesis. In fact, Y. Cao assumed a continuous invariant splitting in the tangent bundle. Furthermore, we show that the fiber-bunching condition is necessary for the validity of Theorem \ref{theo1}. More precisely, we construct a cocycle over a subshift of finite type which has uniform gap between the Lyapunov exponents, but is not uniformly hyperbolic.

\begin{ack}
I would like to thank to my supervisor J. Bochi for his continued guidance and encouragements throughout all this work. This article was supported by CONICYT Scholarship 22180035 and CONICYT PIA ACT172001.
\end{ack}

\section{Preliminaries} \label{section2}

We start recalling the definitions of subshifts of finite type and Anosov diffeomorphisms.

\begin{definition} \label{subshift} Let $Q=(q_{ij})$ be a $l\times l$ matrix with $q_{ij}\in \{0,1\}$. The matrix $Q$ is called \emph{irreducible} if for every pair $i,j\in \{1,2,\dots, l\}$, there is $m_{ij} \ge 1$ such that $(Q^{m_{ij}})_{ij} > 0$. Moreover, the \emph{subshift of finite type associated to the matrix $Q$} is a dynamical system $T:X\to X$, where $X$ is the set of sequences $$X=\{(\dots,x_{-1}| x_0,x_1,\dots)\in \{1,2,\dots, l\}^{\Z}: q_{x_n x_{n+1}}=1 \text{ for every $n\in\Z$}\},$$ and $T$ is the left-shift map defined by $T((x_n)_{n\in \Z})=(x_{n+1})_{n\in \Z}$. 
\end{definition}

Throughout this article all the subshifts will be associated to some irreducible matrix $Q$. Moreover, we are going to consider the following metric on $X$, $$d(x,y):=
\begin{cases}
2^{-N(x,y)} & \text{where } N(x,y):=\min \{|n|\ge 0: x_n\neq y_n\},\\
0   & \textrm{ if } x=y.
\end{cases}$$ Note that $(X,d)$ is a compact metric space, $T$ is a homeomorphism, and $T$ is transitive (that is, $T$ has a dense orbit).
\\
We define the local stable set of $x\in X$ by $$W_{loc}^s(x):=\{(y_n)_{n\in \Z} \in X: y_n=x_n \text{ for every } n \ge 0\},$$ and the local unstable set of $x\in X$ by $$W_{loc}^u(x):=\{(y_n)_{n\in \Z} \in X: y_n=x_n \text{ for every } n \le 0\}.$$ The global stable and unstable manifolds of $x$ are defined by $$W^s(x):=\bigcup_{n=0}^{\infty} T^{-n}(W^s_{loc}(T^nx))\quad \textrm{and} \quad W^u(x):=\bigcup_{n=0}^{\infty} T^{n}(W^u_{loc}(T^{-n}x)).$$ Note that $y \in W^s(x)$ if and only if $\lim_{n\to \infty}d(T^nx, T^ny)=0$ and $y \in W^u(x)$ if and only if $\lim_{n\to \infty}d(T^{-n}x, T^{-n}y)=0$.

\begin{definition}
Let $X$ be a connected manifold. A diffeomorphism $T:X\to X$ is called \emph{Anosov} if there is an invariant decomposition of the tangent bundle $TX$ as a direct sum of continuous $DT$-invariant sub-bundles $E^s_x$ and $E^u_x$ such that, for some appropriate Riemannian metric, $$\|DT_x(v^s)\|<\lambda<1<\lambda^{-1}<\|DT_x(v^u)\|,$$ for all $x \in X$ and for any pair of unit vectors $v^s\in E^s_x$, $v^u\in E^u_x$, where $\lambda\in (0,1)$ is a constant.
\end{definition}

Let us recall the following fundamental result on stable manifolds for an Anosov diffeomorphism. Let $d$ be the Riemannian distance function. 

\begin{theorem} \label{stablemanif}
(Stable Manifold Theorem)
Let $T: X\to X$ be an Anosov diffeomorphism of class $C^k$. Then there exist $\epsilon>0$ and $0<\lambda<1$ such that for each $x\in X$, the local stable manifold $$W^s_{loc}(x):=\{y\in X: d(T^nx, T^n y)\le \epsilon \quad \textrm{for all $n \ge 0$}\},$$ and the local unstable manifold $$W^u_{loc}(x):=\{y\in X: d(T^{-n}x, T^{-n} y)\le \epsilon \quad \textrm{for all $n \ge 0$}\},$$ are $C^k$ embedded disks tangent at $x$ to $E^s_x$ and $E^u_x$ respectively. In addition, 
\begin{itemize}
\item $T(W^s_{loc}(x))\subset W^s_{loc}(Tx)$ and $T^{-1}(W^u_{loc}(x))\subset W^u_{loc}(T^{-1}x)$;
\item $d(T(x),T(y))\le \lambda d(x,y)$ for all $y\in W^s_{loc}(x)$;
\item $d(T^{-1}(x),T^{-1}(y))\le \lambda d(x,y)$ for all $y\in W^u_{loc}(x)$;
\item $W^s_{loc}(x)$ and $W^s_{loc}(x)$ vary continuously with the point $x$ in the $C^k$ topology.
\end{itemize}

Furthermore, the global stable and unstable manifolds of $x$, $$W^s(x):=\bigcup_{n=0}^{\infty} T^{-n}(W^s_{loc}(T^nx))\quad \textrm{and} \quad W^u(x):=\bigcup_{n=0}^{\infty} T^{n}(W^u_{loc}(T^{-n}x)),$$ are smoothly immersed submanifolds of $X$ and they are characterized by $$W^s(x)=\{y\in X: \lim_{n\to \infty}d(T^nx, T^ny)=0\},$$ $$W^u(x)=\{y\in X: \lim_{n\to \infty}d(T^{-n}x ,T^{-n}y)=0\}.$$

\end{theorem}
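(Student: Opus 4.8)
This is a classical theorem, so in the paper itself I would ultimately cite a standard reference (Hirsch--Pugh--Shub, Shub, Katok--Hasselblatt, Brin--Stuck); here I outline the Hadamard graph transform argument I would write out. The plan begins with a reduction: it suffices to construct local \emph{unstable} manifolds for an arbitrary Anosov diffeomorphism. Applied to $T$ this produces $W^u_{loc}$, and applied to $T^{-1}$ — which is Anosov with the two bundles interchanged, so that its expanding bundle at $x$ is $E^s_x$ — it produces $W^s_{loc}$ of $T$, with all the asserted invariance and contraction properties translating accordingly. Fix $x\in X$ and write $x_n=T^nx$. Using compactness of $X$ and uniform continuity of $DT$, I would choose for each point $y$ an adapted chart $\phi_y$ identifying a $\rho$-ball around $y$ with a ball in $E^s_y\oplus E^u_y$, so that $F_y:=\phi_{Ty}^{-1}\circ T\circ\phi_y$ fixes the origin, its linear part $L_y:=DF_y(0)$ is block diagonal and hyperbolic ($\|L_y|_{E^s_y}\|\le\lambda$, $\|L_y^{-1}|_{E^u_{Ty}}\|\le\lambda$), and $F_y=L_y+g_y$ with $g_y(0)=0$, $Dg_y(0)=0$ and $\|Dg_y\|\le\delta$ on the ball, where $\delta$ is as small as we like after shrinking $\rho$ once and for all (uniformly in $y$). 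Standard computations in these charts then give a $DT$-invariant unstable cone field, uniformly expanded at rate $\lambda^{-1}(1-O(\delta))$, and a $DT$-invariant stable cone field, uniformly contracted at rate $\lambda(1+O(\delta))$.

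Next I would set up the graph transform. Let $\mathcal G_y$ be the complete metric space, under the sup distance, of maps $\sigma:B^u_\rho(0)\to\overline{B^s_\rho(0)}$ with $\sigma(0)=0$ and $\mathrm{Lip}(\sigma)\le1$, whose graphs live in $E^s_y\oplus E^u_y$; define $\Phi_y:\mathcal G_y\to\mathcal G_{Ty}$ by requiring that the graph of $\Phi_y\sigma$ be $F_y(\mathrm{graph}(\sigma))$ intersected with the chart ball. I would check: (i) \emph{well-definedness}, using a quantitative inverse-function estimate — the $E^u$-projection of $F_y$ restricted to a Lipschitz-$1$ graph is, thanks to expansion in $E^u$, a bi-Lipschitz homeomorphism onto a region containing $B^u_\rho$, so $\Phi_y\sigma$ is again a full graph over $B^u_\rho$; (ii) \emph{cone invariance}, the new graph has $\mathrm{Lip}\le\mu<1$ because slopes in the unstable cone get multiplied by $\le\lambda^2+O(\delta)$; and (iii) \emph{uniform contraction}, $\Phi_y$ is a $\theta$-contraction in the sup metric with $\theta=\lambda+O(\delta)<1$, independently of $y$. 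Therefore the iterated maps $\Phi_{x_{-1}}\circ\Phi_{x_{-2}}\circ\cdots\circ\Phi_{x_{-m}}$ have image of diameter $\le C\theta^m\to0$, so for any choice of graphs $\tau_m\in\mathcal G_{x_{-m}}$ the functions $\sigma_m:=(\Phi_{x_{-1}}\circ\cdots\circ\Phi_{x_{-m}})(\tau_m)$ converge in $C^0$ to a unique limit $h$ that is independent of all choices; I set $W^u_{loc}(x):=\phi_x(\mathrm{graph}(h))$. The inclusions $T^{-1}(W^u_{loc}(x))\subset W^u_{loc}(T^{-1}x)$ and $T(W^u_{loc}(x))\supset W^u_{loc}(Tx)$ fall out of the construction, and $d(T^{-1}y,T^{-1}z)\le\lambda\, d(y,z)$ on $W^u_{loc}(x)$ follows by integrating the contracting-cone estimate along the disk (relabelling $\lambda$ if needed). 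The characterization $W^u_{loc}(x)=\{y:d(T^{-n}x,T^{-n}y)\le\epsilon\ \forall n\ge0\}$ I would deduce from the cone fields: a point staying $\epsilon$-close to $x$ in backward time must have vanishing component transverse to the unstable graph, since otherwise that component would be expanded under backward iteration and eventually push the orbit out of the $\epsilon$-ball.

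For the regularity I would rerun the graph transform on the space of $C^1$ graphs with $\mathrm{Lip}\le1$, metrized by the $C^0$ distance of the maps together with the $C^0$ distance of their derivatives; the induced action on the derivative component is again a contraction (rate $\lambda^2+O(\delta)$ over the base dynamics), so the Hirsch--Pugh fiber contraction theorem forces the invariant section $h$ to be $C^1$, with $Dh(0)=0$ — equivalently, $W^u_{loc}(x)$ is tangent to $E^u_x$ at $x$ — and iterating the same scheme on spaces of $C^j$ jets for $j=2,\dots,k$, using that $T$ is $C^k$, yields $h\in C^k$. Continuous dependence of $W^u_{loc}(x)$ on $x$ in the $C^k$ topology I would get from continuous dependence of the data $(\phi_{x_{-n}},F_{x_{-n}})$ on $x$ (only continuous, since $E^s,E^u$ are only continuous) together with the fact that a uniformly contracting family of maps — here the graph transforms and their jet extensions — has fixed point depending continuously on the parameter. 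Finally, for the global manifolds: $W^u(x)=\bigcup_n T^n(W^u_{loc}(T^{-n}x))$ is an increasing union (since $T(W^u_{loc}(y))\supset W^u_{loc}(Ty)$) of $C^k$ embedded disks, hence a $C^k$ injectively immersed submanifold; the inclusion $W^u(x)\subset\{y:d(T^{-n}x,T^{-n}y)\to0\}$ follows from the contraction along $W^u_{loc}$ (once $T^{-n}y\in W^u_{loc}(T^{-n}x)$ one has $d(T^{-m}x,T^{-m}y)\le\lambda^{m-n}\epsilon$), and the reverse inclusion from the local characterization applied far enough along the backward orbit. The main obstacle I anticipate is the smoothness bootstrap — making the fiber-contraction argument uniform along the two-sided orbit and turning mere continuity of the splitting into $C^k$-topology continuity of the manifolds; the well-definedness of the graph transform in step (i), which needs a genuine quantitative estimate rather than a soft argument, is the other delicate point.
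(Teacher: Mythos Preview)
Your outline of the Hadamard graph transform proof is correct and standard, but the paper does not prove this theorem at all: it is stated in the Preliminaries section as a classical result being recalled, with no proof given and with references such as \cite{KH} and \cite{BS} implicitly covering it. Your instinct in the first sentence --- to cite a standard reference and move on --- is exactly what the paper does; the detailed graph-transform sketch you provide is sound but goes well beyond what the paper itself contains.
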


Another property of Anosov dynamics is their local product structure. More precisely, there is a constant $\delta_1>0$ such that for every $x, y\in X$ which satisfy $d(x,y)<\delta_1$, the intersection $W^u_{loc}(x)\bigcap W^s_{loc}(y)$ consists of a unique point, denoted by $[x,y]$. Even more, the intersection $[x,y]$ is transverse and the function $[\cdot, \cdot]$ is continuous. Let us recall some other basic concepts.

\begin{definition}
A sequence $x_0,x_1,\dots, x_n=x_0$ of points is called a \emph{periodic $\epsilon$-pseudo-orbit} if $d(T(x_k),x_{k+1})<\epsilon$ for $k=0,1,\dots, n-1$.
\end{definition}

\begin{definition}
A map $T$ satisfies the \emph{closing property} if there are positive constants $C, \delta_0$ such that for $\epsilon< \delta_0$ and any periodic $\epsilon$-pseudo-orbit $(x_0,x_1,\dots,x_n)$, there is a periodic point $p$ such that $T^np=p$ and $d(T^k p, x_k)<C\epsilon$, for every $k\in \{0,1,\dots, n\}$.
\end{definition}

\begin{remark}
In particular, if a map $T$ satisfies the closing property and $x \in X$ satisfies $d(x,T^n x)<\delta_0$, then there is a periodic point $p=T^n p$ such that  the orbit segments $x,Tx, \dots, T^n x $ and $p,Tp, \dots, T^n p $ satisfy $$d(T^k x,T^k p)<C\epsilon\quad \textrm{for every $k\in \{0,1,\dots, m\}$.}$$
\end{remark}

It is well known that Anosov diffeomorphisms satisfy the closing property. Furthermore, it is trivial to check that subshifts of finite type also satisfy this property. For more details see \cite{KH}.
\\

We proceed to state a central theorem in the theory of Lyapunov exponents. 

\begin{theorem} \label{oseledets}
(Oseledets Theorem)
Let $T: X\to X$ be a $\mu$-preserving mapping and $A:X\to SL(2,\R)$ such that $\log \|A \|\in L^1(\mu)$. If $\lambda_+(x)>0$ for almost every $x\in X$, then for almost every $x\in X$ there exists a one dimensional vector space $E_x^-$ such that 
\begin{equation*}
\lim_{n\to \infty} \dfrac{1}{n}\log \|A^n(x) v\|=
\begin{cases}
\lambda_+(x) & \textrm{ if } v\in \R^2\setminus E_x^- ,\\
\lambda_-(x)  & \textrm{ if } v\in E_x^-\setminus\{0\}.
\end{cases}
\end{equation*}
Moreover, the spaces $E^-_x$ are invariant and depend measurably on the point $x$.
\end{theorem}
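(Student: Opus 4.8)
The plan is to read the Lyapunov exponents off Kingman's theorem and then construct $E^-_x$ by hand, as the limit of the ``most contracted directions'' of the matrices $A^n(x)$. Since $\det A\equiv 1$, every $M\in SL(2,\R)$ has singular values $e^{\pm a}$ with $a=\log\|M\|\ge 0$; in particular $\|M\|\ge 1$ and $\|M^{-1}\|=\|M\|$. Hence $\log\|A\|\ge 0$ lies in $L^1(\mu)$, $\log\|A^n(x)^{-1}\|=\log\|A^n(x)\|$, and the limits defining $\lambda_+(x)$ and $\lambda_-(x)$ exist $\mu$-a.e.\ (Kingman applied to the subadditive sequence $\log\|A^n(x)\|$), are $T$-invariant, and satisfy $\lambda_-=-\lambda_+$. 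By hypothesis $\lambda_+>0$ a.e., so $\|A^n(x)\|\to\infty$ and, for $n$ larger than some $n_0(x)$, the matrix $A^n(x)$ has two distinct singular values; let $u_n(x)\in\mathbb{RP}^1$ be its most contracted direction (the line, up to sign, along which $\|A^n(x)v\|/\|v\|$ is minimal). Each $u_n$ depends measurably on $x$.

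Next I would show that $(u_n(x))_n$ is Cauchy in $\mathbb{RP}^1$ for a.e.\ $x$. The key input is an elementary estimate from the geometry of singular value decompositions in $SL(2,\R)$: there is $C_0>0$ such that for every $M\in SL(2,\R)$ with $\|M\|>1$ and every $N\in SL(2,\R)$,
$$\angle\big(u(NM),\,u(M)\big)\ \le\ C_0\,\|N\|^{2}\,\|M\|^{-2}.$$
Applying this with $M=A^k(x)$ and $N=A(T^kx)$, so $NM=A^{k+1}(x)$, and using that $\tfrac1k\log\|A(T^kx)\|\to 0$ a.e.\ (Birkhoff's theorem for $\log\|A\|\in L^1(\mu)$) while $\tfrac1k\log\|A^k(x)\|\to\lambda_+(x)>0$, one gets: for a.e.\ $x$ and every $\epsilon\in(0,\lambda_+(x)/4)$, and all large $k$, $\angle\big(u_{k+1}(x),u_k(x)\big)\le e^{-2k(\lambda_+(x)-2\epsilon)}$. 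Summing the geometric tail, $u_n(x)\to E^-_x$ for a measurable line field, with $\angle\big(E^-_x,u_n(x)\big)\le C_1(x)\,e^{-2n(\lambda_+(x)-2\epsilon)}$ for $n$ large.

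Finally I would verify the growth dichotomy. Fix a regular $x$ and decompose a unit vector $v=\alpha\,u_n(x)+\beta\,u_n(x)^{\perp}$; from the singular value decomposition and $\|A^n(x)^{-1}\|^{-1}=\|A^n(x)\|^{-1}$ one has $\|A^n(x)v\|^{2}=\alpha^{2}\|A^n(x)\|^{-2}+\beta^{2}\|A^n(x)\|^{2}$. If $v$ spans $E^-_x$, then $|\beta|=|\sin\angle(v,u_n(x))|\le C_1(x)e^{-2n(\lambda_+(x)-2\epsilon)}$ and $|\alpha|\to 1$; the first term has exponential rate $-2\lambda_+(x)$, while the second is bounded by $C_1(x)^2e^{-4n(\lambda_+(x)-2\epsilon)}\|A^n(x)\|^2$, of rate $\le-2\lambda_+(x)+O(\epsilon)<0$, which after letting $\epsilon\downarrow 0$ does not affect the limit, so $\tfrac1n\log\|A^n(x)v\|\to-\lambda_+(x)=\lambda_-(x)$. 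If $v\notin E^-_x$, then for large $n$ the angle $\angle(v,u_n(x))$ stays $\ge\tfrac12\angle(v,E^-_x)>0$, so $|\beta|$ is bounded below and $\|A^n(x)v\|\ge|\beta|\,\|A^n(x)\|$; with $\|A^n(x)v\|\le\|A^n(x)\|$ this gives rate $\lambda_+(x)$. This is the asserted dichotomy, and it characterizes $E^-_x$ intrinsically as $\{v:\limsup_n\tfrac1n\log\|A^n(x)v\|<\lambda_+(x)\}$. Invariance follows: for $v\in E^-_x$ and $w=A(x)v$, $\|A^n(Tx)w\|=\|A^{n+1}(x)v\|$, so $\tfrac1n\log\|A^n(Tx)w\|\to\lambda_-(x)=\lambda_-(Tx)<\lambda_+(Tx)$, whence $w\in E^-_{Tx}$; since $A(x)$ is invertible and $E^-_x$ is one-dimensional, $A(x)E^-_x=E^-_{Tx}$. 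Measurability of $E^-_x$ is inherited from the $u_n$.

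The main obstacle is the elementary angle estimate for $SL(2,\R)$ in the second step and, more to the point, its careful combination with the a.e.\ asymptotics of $\log\|A^n(x)\|$ and $\log\|A(T^nx)\|$: one must confirm that the directions $u_n(x)$ converge to $E^-_x$ at exponential rate $\approx 2\lambda_+(x)$, i.e.\ fast enough to dominate the expansion factor $\|A^n(x)\|^2$ when estimating $\|A^n(x)v\|$ along $E^-_x$. The remaining steps are routine.
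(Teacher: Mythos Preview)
The paper does not actually prove this theorem: it is quoted as a classical result, with references to \cite{AB} (and to \cite{V} for the general $GL(d,\R)$ version) in lieu of a proof. So there is no ``paper's own proof'' to compare against.

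That said, your sketch is a correct outline of the standard two-dimensional argument via convergence of most-contracted directions. The angle estimate you invoke is indeed elementary: writing $u(NM)=\cos\alpha\,u(M)+\sin\alpha\,u(M)^{\perp}$, one has
\[
\|N\|^{-1}|\sin\alpha|\,\|M\|\ \le\ \|N\|^{-1}\|M\,u(NM)\|\ \le\ \|NM\,u(NM)\|\ =\ \|NM\|^{-1}\ \le\ \|N\|\,\|M\|^{-1},
\]
so $|\sin\alpha|\le\|N\|^{2}\|M\|^{-2}$, which is exactly what you need. One small clarification: your appeal to ``Birkhoff's theorem'' for $\tfrac1k\log\|A(T^kx)\|\to 0$ is really the standard consequence that if $f\in L^{1}(\mu)$ and $T$ preserves $\mu$ then $f(T^{k}x)/k\to 0$ a.e.; this follows from the a.e.\ convergence of the ergodic averages $\tfrac1n S_n f$ and does not require ergodicity, which matters since the statement is for a general invariant $\mu$. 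With that in place, the Cauchy estimate, the exponential rate $\approx 2\lambda_+(x)$ for $\angle(E^-_x,u_n(x))$, and the growth dichotomy all go through as you describe.
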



Note that when $(T,A)$ is a uniformly hyperbolic cocycle $E^-_x=E^s_x$ for each $x\in X$. See \cite{AB} for a detailed proof of Theorem \ref{oseledets}. See \cite{V} page 40 for the general version of Oseledets theorem for $GL(d,\R)$-cocycles. Now, we are going to define the fiber-bunching condition, the main assumption in Theorem \ref{theo1}.

\begin{definition}
Let $T:X\to X$ be either a subshift of finite type or an Anosov diffeomorphism. A linear cocycle $(T,A)$ is called \emph{fiber-bunched} if there exists $\alpha>0$ such that the function $A:X\to GL(d,\R)$ is $\alpha$-H\"{o}lder and for every $x\in X$ $$\|A(x)\|\cdot \|A(x)^{-1}\|\cdot 2^{-\alpha}<1$$ in the case where $T$ is a subshift of finite type and $$\|A(x)\|\cdot \|A(x)^{-1}\|\cdot \lambda^{\alpha}<1$$ in the case where $T$ is an Anosov diffeomorphism. We also say that the linear cocycle $(T,A)$ satisfies the \emph{fiber-bunching condition}.
\end{definition}

\begin{remark}
In our context, $A$ takes values in $SL(2, \R)$. Since $\|M\|=\|M^{-1}\|$ for every $M$ in $SL(2,\R)$, we can write the fiber bunching condition as $$\|A(x)\|^2\cdot 2^{-\alpha}<1 \quad \textrm{or} \quad \|A(x)\|^2\cdot \lambda^{\alpha}<1,$$ if the cocycle is considered over a subshift of finite type or an Anosov diffeomorphism respectively.
\end{remark}

The most useful property of fiber-bunched cocycles is the existence of holonomies. The following theorem proved by C. Bonatti, X. G\'omez-Mont and M. Viana in \cite{BGMV} (see also \cite{KS}) gives the existence of these maps and describes their main properties.

\begin{theorem} \label{theo0}
Let $(T,A)$ be a fiber-bunched linear cocycle. For every $y\in W^s(x)$, the limit $$H_{x\gets y}^{s}:=\lim_{n \to \infty}A^{n}(x)^{-1}A^{n}(y)$$ exists and defines a linear isomorphism $H^s_{x \gets y}: \R^d \to \R^d$. We say that the family of linear automorphisms $\{H_{x\gets y}^{s}: y\in W^s(x)\}$ is the \emph{stable holonomy} for the cocycle $(T,A)$. Besides, for every $y,z\in W^s(x)$
$$H^s_{x \gets x}=I, \quad H_{x\gets y}^{s}=H_{x\gets z}^{s} \cdot H_{z \gets y}^{s},$$
$$A(x)\cdot H^s_{x \gets y}=H^s_{Tx \gets Ty} \cdot A(y).$$
Also, for every $y\in W^s_{loc}(x)$, there is a positive constant $C_0$ such that $\|H_{x \gets y}^{s}-I\|\le C_0 d(x,y)^{\alpha}$. Finally, if $y\in W^u(x)$ there are analogous properties for $$H_{x \gets y}^{u}:=\lim_{n \to \infty}A^{-n}(x)^{-1}A^{-n}(y).$$
\end{theorem}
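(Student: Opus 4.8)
The plan is to establish the statement first for $y$ in the \emph{local} stable set $W^s_{loc}(x)$, where the limit defining $H^s_{x\gets y}$ is controlled by a geometrically convergent telescoping series, and then to bootstrap to the global stable set via the cocycle identity $A^{n+N}(x)=A^n(T^Nx)A^N(x)$. Throughout, let $H_A$ be the $\alpha$-H\"older constant of $A$, let $M:=\sup_{x\in X}\|A(x)^{-1}\|<\infty$ (finite by compactness of $X$ and continuity of $A$ into $GL(d,\R)$), and note $\|A(x)\|\ge 1/M$ since $1\le\|A(x)\|\,\|A(x)^{-1}\|$. Put $\theta:=2^{-\alpha}$ in the subshift case and $\theta:=\lambda^{\alpha}$ in the Anosov case; the fiber-bunching inequality together with compactness gives $\kappa:=\sup_{x\in X}\|A(x)\|\,\|A(x)^{-1}\|\,\theta<1$. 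Finally, for $y\in W^s_{loc}(x)$ one has exponential contraction of forward orbits, $d(T^nx,T^ny)^\alpha\le C\,\theta^{n}\,d(x,y)^\alpha$ for a uniform constant $C$ — directly from $N(T^nx,T^ny)=N(x,y)+n$ in the subshift case, and from Theorem~\ref{stablemanif} in the Anosov case; in particular $\sum_{n\ge0}d(T^nx,T^ny)^\alpha<\infty$ uniformly.

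\textbf{Local stable holonomy.} Using $A^{n+1}(x)=A(T^nx)A^n(x)$ one computes
$$A^{n+1}(x)^{-1}A^{n+1}(y)-A^n(x)^{-1}A^n(y)=A^n(x)^{-1}\bigl[A(T^nx)^{-1}A(T^ny)-I\bigr]A^n(y),$$
and since $A(T^nx)^{-1}A(T^ny)-I=A(T^nx)^{-1}\bigl(A(T^ny)-A(T^nx)\bigr)$ the bracket has norm at most $M H_A\, d(T^nx,T^ny)^\alpha$. Bounding $\|A^n(x)^{-1}\|\le\prod_{j=0}^{n-1}\|A(T^jx)^{-1}\|$ and $\|A^n(y)\|\le\prod_{j=0}^{n-1}\|A(T^jy)\|$, and using $\|A(T^jy)\|\le\|A(T^jx)\|+H_A\,d(T^jx,T^jy)^\alpha\le\|A(T^jx)\|\bigl(1+M H_A C\theta^j d(x,y)^\alpha\bigr)$, the product $\prod_{j=0}^{n-1}\|A(T^jy)\|$ is at most $C_1\prod_{j=0}^{n-1}\|A(T^jx)\|$ with $C_1:=\prod_{j\ge0}\bigl(1+M H_A C\theta^j\,\mathrm{diam}(X)^\alpha\bigr)<\infty$. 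Distributing the factor $\theta^n$ over the $n$ factors of the product then yields
$$\bigl\|A^{n+1}(x)^{-1}A^{n+1}(y)-A^n(x)^{-1}A^n(y)\bigr\|\le M^2 H_A C C_1\, d(x,y)^\alpha\prod_{j=0}^{n-1}\bigl(\|A(T^jx)\|\,\|A(T^jx)^{-1}\|\,\theta\bigr)\le M^2 H_A C C_1\, d(x,y)^\alpha\,\kappa^n.$$
Hence the telescoping series converges geometrically, $H^s_{x\gets y}:=\lim_n A^n(x)^{-1}A^n(y)$ exists, and summing from $n=0$ (first term $I$) gives $\|H^s_{x\gets y}-I\|\le C_0\,d(x,y)^\alpha$ with $C_0:=M^2 H_A C C_1/(1-\kappa)$, uniform over $y\in W^s_{loc}(x)$. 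Invertibility is immediate: the identical estimate with $x$ and $y$ swapped shows $\lim_n A^n(y)^{-1}A^n(x)$ also exists, and multiplying the two sequences gives $H^s_{x\gets y}\bigl(\lim_n A^n(y)^{-1}A^n(x)\bigr)=I$.

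\textbf{Globalization and algebraic properties.} For $y\in W^s(x)$ pick $N\ge0$ with $T^Ny\in W^s_{loc}(T^Nx)$; since $A^{n+N}(x)^{-1}A^{n+N}(y)=A^N(x)^{-1}\bigl(A^n(T^Nx)^{-1}A^n(T^Ny)\bigr)A^N(y)$, the left side converges as $n\to\infty$ by the previous paragraph, so the full sequence $A^n(x)^{-1}A^n(y)$ converges to an isomorphism $H^s_{x\gets y}$; being the limit of the fixed sequence, it is independent of $N$. Then $H^s_{x\gets x}=I$ trivially; factoring $A^n(x)^{-1}A^n(y)=\bigl(A^n(x)^{-1}A^n(z)\bigr)\bigl(A^n(z)^{-1}A^n(y)\bigr)$ and letting $n\to\infty$ gives $H^s_{x\gets y}=H^s_{x\gets z}H^s_{z\gets y}$ for $y,z\in W^s(x)$; and from $A^{n+1}(x)=A^n(Tx)A(x)$ (applied to $x$ and to $y$) one gets $A^n(Tx)^{-1}A^n(Ty)=A(x)\,A^{n+1}(x)^{-1}A^{n+1}(y)\,A(y)^{-1}$, whose limit is $H^s_{Tx\gets Ty}=A(x)H^s_{x\gets y}A(y)^{-1}$, i.e.\ $A(x)H^s_{x\gets y}=H^s_{Tx\gets Ty}A(y)$. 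The unstable holonomies follow by applying everything above to $(T^{-1},\widetilde A)$ with $\widetilde A(x):=A(T^{-1}x)^{-1}$, so that $\widetilde A^n(x)=A^{-n}(x)$: this cocycle is $\alpha$-H\"older, satisfies the fiber-bunching inequality for $T^{-1}$ (whose stable and unstable structures are those of $T$ interchanged), and its stable holonomy is precisely $H^u$ for $(T,A)$.

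\textbf{Main obstacle.} The crux is the local estimate: one must arrange the telescoping so that the fiber-bunching inequality collapses the product $\prod_{j<n}\bigl(\|A(T^jx)\|\,\|A(T^jx)^{-1}\|\,\theta\bigr)$ into geometric decay $\kappa^n$, while simultaneously the spurious factor $\prod_{j<n}\|A(T^jy)\|$ is absorbed into $\prod_{j<n}\|A(T^jx)\|$ up to a uniform constant — which is exactly where the exponential contraction $d(T^jx,T^jy)\to0$ along $W^s_{loc}$ and the uniform lower bound $\|A(\cdot)\|\ge 1/M$ enter, since together they force $\sum_j d(T^jx,T^jy)^\alpha<\infty$ and hence bound the ratio of the two products. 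Everything else reduces to bookkeeping with the cocycle relation and passing to limits.
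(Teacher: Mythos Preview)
The paper does not actually prove this theorem: it is quoted as a known result, attributed to Bonatti--G\'omez-Mont--Viana \cite{BGMV} (see also \cite{KS}), and no argument is supplied. Your proof is correct and is essentially the standard one found in those references --- the telescoping estimate on $A^{n+1}(x)^{-1}A^{n+1}(y)-A^n(x)^{-1}A^n(y)$, the absorption of $\prod_j\|A(T^jy)\|$ into $\prod_j\|A(T^jx)\|$ via the H\"older bound and the lower bound $\|A(\cdot)\|\ge 1/M$, and the collapse of the remaining product to $\kappa^n$ via fiber-bunching are exactly the steps in the literature; the globalization and the algebraic identities are the usual formal consequences of the cocycle relation. (A cosmetic point: your constant $M^2$ in the final displayed local estimate should be $M$, since the single factor $\|A(T^nx)^{-1}\|\le M$ from the bracket is the only extra $M$ beyond $H_A C C_1$; this does not affect anything.)
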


\section{Proof of Theorem \ref{theo1}}

Let us start by proving Theorem \ref{theo1} for $SL(2,\R)$-cocycles over a transitive subshift of finite type.

\begin{proof}
Let $T$ be a subshift of finite type and let $(T,A)$ be a fiber bunched $SL(2,\R)$-cocycle. Suppose $(T,A)$ is not uniformly hyperbolic. By Proposition \ref{prop1}, for all $\epsilon >0$ and $n_*\in \N$, there exist $n_0\ge n_*$ and $x=(\dots,x_{-1}|x_0,x_1,\dots)\in X$ such that $\|A^{n_0}(x)\|\le e^{\epsilon n_0}$. Since $Q$ is irreducible, there is $n_1$ depending on $x_{n_0}$ and $x_0$, such that $Q^{n_1}_{x_{n_0}x_0}>0$. Hence, there is $(c_1,c_2,\dots, c_{n_1-1})\in \{1,2,\dots, l\}^{n_1-1}$ such that $$q_{x_{n_0}c_1}=1,\quad q_{c_{n_1-1} x_0}=1,\quad \textrm{and} \quad q_{c_i c_{i+1}}=1 \quad \textrm{ for every $i\in \{1,2,\dots, n_1-2\}$}.$$ Let $p=T^{n_0+n_1}p$ a periodic point of period $n_0+n_1$, with zeroth coordinate $x_0$ such that $(p_n)_{n=0}^{n_0+n_1-1}=(x_{0},x_1,\dots, x_{n_0-1},x_{n_0},c_1,c_2,\dots, c_{n_1-1})$. Let $$y=[p,x]=(\dots x_{0},x_1,\dots, x_{n_0-1},x_{n_0},c_1,c_2,\dots, c_{n_1-1}| x_0,x_1, \dots).$$

By construction $T^{n_0}y\in W^u_{loc}(p)$ and $p\in W^u_{loc}(y)$, then $$A^{n_0}(p)=H_{p \gets T^{n_0}y}^u \cdot A^{n_0}(y) \cdot H_{y \gets p}^u.$$ Analogously, since $T^{n_0}x\in W^s_{loc}(T^{n_0}y)$ and $y\in W^s_{loc}(x)$, 

\begin{align*} 
A^{n_0+n_1}(p)&=  A^{n_1}(T^{n_0}p)\cdot A^{n_0}(p) \\ 
 &=  A^{n_1}(T^{n_0}p)\cdot H_{T^{n_0}p \gets T^{n_0}y}^u \cdot H_{T^{n_0}y \gets T^{n_0}x}^s \cdot A^{n_0}(x)\cdot H_{x \gets y}^s\cdot H_{y \gets p}^u.
\end{align*}

\begin{center}
\begin{tikzpicture}

    \draw[semithick] (1.5,4) -- (4.5,4);
    \draw[semithick] (3.8,3) -- (3.8,5);
      
    \draw [->] (3.1,3.3) -- (2.1,2.4);
    \node [above right] at (2.6,2.3) {$T^{n_0}$};

\node [above ] at (3.8,5) {$W^u_{loc}$};
\node [right] at (4.5,4) {$W^s_{loc}$};

\filldraw (3.8,4) circle (2pt);
\node [above right] at (3.8,3.6) {$y$};    


\filldraw (0.5,1) circle (2pt);
\node [below right] at (0.5,1) {$T^{n_0}(y)$};

\filldraw (0.5,1.9) circle (2pt);
\node [above right] at (0.5,1.7) {$T^{n_0}(p)$};

\node [above] at (0.5,2.2) {$W^u_{loc}$};
\node [right] at (2.3,1) {$W^s_{loc}$};

\filldraw (3.8,4.4) circle (2pt);
\node [above right] at (3.8,4.4) {$p$};

    \draw[semithick] (0.5,0) -- (0.5,2.2);
    \draw[semithick] (-1,1) -- (2.3,1);

\filldraw (0,1) circle (2pt);
\node [below] at (-0.4,1) {$T^{n_0}(x)$};

\filldraw (2,4) circle (2pt);
\node [above right] at (2,4) {$x$};
\node [below] at (2.5,-0.1) {$\textmd{Figure 1: Theorem \ref{theo1}}$};

\end{tikzpicture}   
\end{center}

If we take the norm, $$\|A^{n_0+n_1}(p)\|\le \|A^{n_1}(T^{n_0}p)\| \cdot \|H_{p \gets T^{n_0}y}^u\| \cdot \|H_{T^{n_0}y \gets T^{n_0}x}^s\| \cdot \|A^{n_0}(x)\|\cdot \|H_{x \gets y}^s\| \cdot \|H_{y \gets p}^u\|.$$ It is enough to observe that each term is bounded by a constant $C$ that does not depend on $n_0$. Note that $\|A^{n_1}(T^{n_0}p)\|$ is bounded as $n_1<\max_{1\le i,j\le n}m_{ij}<\infty$, where $m_{ij}$ are defined as before in Definition \ref{subshift}. Therefore $$\|A^{n_0+n_1}(p)\| \le C^5 \|A^{n_0}(x)\|\le C^5 e^{n_0\epsilon}$$ Hence, by submultiplicativity of the norm, $$\lambda_+(p)\le 5\frac{\log C}{n_0+n_1}+\dfrac{n_0\epsilon}{n_0+n_1}\le 2\epsilon,$$ where the previous inequality follows after choosing $n_0$ big enough. This gives a contradiction since each periodic point $p$ is in every Borel set $\mathcal{S}\subset X$ of full probability and $2\epsilon$ can be chosen less than $\tau$.
\end{proof}

We proceed to prove Theorem \ref{theo1} for a $SL(2,\R)$-cocycle defined over a transitive Anosov diffeomorphism, which satisfies the fiber-bunching condition. Firstly, we are going to state three lemmas that are going to be useful along the proof. The first lemma is a well-known result proved in \cite{BS} page 131. It justifies the transitivity hypothesis in Theorem \ref{theo1}.

\begin{lemma} \label{lemma1}
Let $T:X \to X$ be an Anosov diffeomorphism of a compact connected manifold. The following statements are equivalent:

\begin{enumerate}
\item[a)] every unstable manifold $W^u(x)$ is dense in $X$;
\item[b)] every stable manifold $W^s(x)$ is dense in $X$;
\item[c)] $T$ is topologically transitive.
\end{enumerate}
\end{lemma}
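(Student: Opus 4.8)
The plan is to prove the equivalence of the three statements by showing $(a) \Leftrightarrow (c)$ and $(b) \Leftrightarrow (c)$; since $T^{-1}$ is also Anosov with its stable and unstable manifolds exchanged, once one of these equivalences is established the other follows by applying it to $T^{-1}$. So I would really only carry out $(a) \Leftrightarrow (c)$ in detail.

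For the direction $(a) \Rightarrow (c)$, I would take any two nonempty open sets $U, V \subset X$ and show $T^n(U) \cap V \neq \emptyset$ for some $n \geq 0$. Pick a point $x \in U$; by hypothesis $W^u(x)$ is dense, so it meets $V$, say at a point $z \in W^u(x) \cap V$. By the characterization of the global unstable manifold (Theorem \ref{stablemanif}, or the analogous remark for subshifts), $d(T^{-n}x, T^{-n}z) \to 0$ as $n \to \infty$, so for $n$ large the point $T^{-n}z$ lies in $T^{-n}(U')$ for a small neighborhood; more carefully, I would fix a small ball $B(x,r) \subset U$, note that $T^{-n}z \to$ (the backward orbit stays near the backward orbit of $x$), and use uniform contraction on unstable manifolds under $T^{-1}$ to get $T^{-n}z \in B(x,r) \subset U$ for $n$ large, whence $z \in T^n(U) \cap V$. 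This gives topological transitivity in the open-set formulation, which is equivalent to having a dense orbit on a compact metric space.

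For the converse $(c) \Rightarrow (a)$, suppose $T$ is topologically transitive and let $x \in X$; I must show $W^u(x)$ is dense. The standard argument: transitivity plus the local product structure (the bracket $[\cdot,\cdot]$ recalled in the excerpt) implies that the closure of any single unstable manifold is open, and by a connectedness argument it is all of $X$. Concretely, I would first show that $\overline{W^u(x)}$ is $T$-invariant and saturated by local unstable sets, then use the local product structure: if $y \in \overline{W^u(x)}$, then for every $w$ close to $y$ the point $[w,y] = W^u_{loc}(w) \cap W^s_{loc}(y)$ lies on the unstable leaf through $w$ and near $y$; combined with the fact that $\bigcup_n T^n(W^u_{loc})$ exhausts the global leaf, one propagates density. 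Alternatively, and perhaps more cleanly, I would invoke that a transitive Anosov diffeomorphism has a dense orbit $\{T^n x_0\}$; each point of this orbit has its unstable manifold equal (as a set, up to the leaf through that orbit point being a single leaf $W^u(x_0)$), so $W^u(x_0)$ accumulates on a dense set and, being a leaf of a continuous foliation with local product structure, $\overline{W^u(x_0)} = X$; then a further argument using that stable and unstable holonomies are continuous upgrades this from one special leaf to every leaf.

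The main obstacle is the $(c) \Rightarrow (a)$ direction: passing from transitivity (an orbit statement) to density of \emph{every} unstable leaf requires genuinely using the local product structure / hyperbolic structure, not just soft topology, and one must be careful that the argument works uniformly and does not secretly assume the leaf one started with is the dense orbit's leaf. Since this is cited as a known result from \cite{BS}, in the paper I would most likely just reference it; but if pressed to prove it, the heart of the matter is the open-closed dichotomy for $\overline{W^u(x)}$ via the bracket operation. The $(a) \Leftrightarrow (b)$ symmetry via $T^{-1}$ and the open-sets reformulation of transitivity are routine.
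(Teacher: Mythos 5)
The paper does not actually prove this lemma: it is quoted as a known result with a citation to \cite{BS}, p.~131, so simply referencing Brin--Stuck (as you say you would do in practice) matches the paper. Judged as a proof, however, your sketch has two genuine gaps. First, in the direction $(a)\Rightarrow(c)$ the key step fails as written: from $z\in W^u(x)\cap V$ you get $d(T^{-n}z,T^{-n}x)\to 0$, i.e.\ $T^{-n}z$ approaches the \emph{backward orbit} of $x$, not the point $x$ itself, so the conclusion $T^{-n}z\in B(x,r)\subset U$ (hence $z\in T^n(U)\cap V$) is a non sequitur --- $T^{-n}x$ need not be anywhere near $U$. The standard repair is different in spirit: take a small plaque $D\subset W^u(x)\cap U$, note that $T^n(D)$ contains a leaf ball $W^u_R(T^nx)$ with $R\to\infty$, and use a compactness/uniformity statement (for every $\epsilon>0$ there is $R$ such that \emph{every} ball of radius $R$ inside \emph{any} unstable leaf is $\epsilon$-dense in $X$ --- this is the same kind of uniformization as Lemma \ref{lemma2}) to conclude that $T^n(U)$ meets $V$ for large $n$.

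Second, in the direction $(c)\Rightarrow(a)$ the central claim you lean on --- that the local product structure makes $\overline{W^u(x)}$ open --- is not justified by the bracket argument you indicate: if $u\in W^u(x)$ is close to $y\in\overline{W^u(x)}$ and $z$ is close to $y$, then $[u,z]\in W^u(x)$ lies only within distance comparable to $d(u,z)$ of $z$, which does not shrink for fixed $z$, so you only learn that $z$ is at small (but not arbitrarily small) distance from the leaf, not that $z\in\overline{W^u(x)}$. Your alternative via a dense orbit also conflates leaves: $W^u(T^nx_0)=T^n(W^u(x_0))$ is in general a different leaf from $W^u(x_0)$, so density of the orbit only gives density of $\bigcup_n T^n(W^u(x_0))$, not of the single leaf. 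Making this direction honest requires genuinely dynamical input (e.g.\ $NW(T)=X$, density of periodic points via the closing lemma, invariance of the leaf closure for a periodic point together with forward iteration to contract the stable displacement, and then a connectedness argument), which is exactly what the proof in \cite{BS} supplies; the reduction $(a)\Leftrightarrow(b)$ via $T^{-1}$ is the only part of your outline that is complete as stated.
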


Let $d(\cdot, \cdot)$ be the distance induced by the Riemannian metric on $X$. Let $d_s(\cdot, \cdot)$ and $d_u(\cdot, \cdot)$ be the induced metrics on $W^s(x)$ and $W^u(x)$ respectively. In addition, the set $W^s_R(x)$ will denote the ball of radius $R$ centered in $x$ with respect to $d_s(\cdot, \cdot)$. The definition of $W^u_R(x)$ is analogous. Note that $T$ is a contraction with respect to $d_s$. More precisely, $$d_s(T^nx,T^ny)\le \lambda^n d_s(x,y),$$ for any $x\in X$, $y \in W^s(x)$ and $n \ge 0$. For more details see \cite{BS}.

\begin{lemma} \label{lemma2}
There is a positive constant $R_0$ such that for every pair of points $x,y\in X$, we have $W_{R_0}^u(x)\bigcap W^s_{R_0}(y)\neq \emptyset$.
\end{lemma}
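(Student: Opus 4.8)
The plan is to build the required intersection point using the local product structure bracket $[\cdot,\cdot]$, which is defined — and behaves robustly — for every pair of points at distance less than the constant $\delta_1$; the only real difficulty is making the two radii uniform in $x$ and $y$. First I fix once and for all: the constant $\delta_1>0$ coming from the local product structure; a constant $r_0>0$ that upper–bounds the intrinsic diameters of all local stable and unstable manifolds $W^s_{loc}(z),W^u_{loc}(z)$ (finite by Theorem \ref{stablemanif} and compactness of $X$); and a radius $r_1>0$ with $W^u_{r_1}(z)\subseteq W^u_{loc}(z)$ for every $z\in X$. I claim it is enough to prove the uniform density statement: there is $\widetilde R<\infty$ such that $W^u_{\widetilde R}(x)$ is $\tfrac{\delta_1}{2}$-dense in $X$ for every $x\in X$. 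Granting this, given arbitrary $x,y\in X$ choose $z\in W^u_{\widetilde R}(x)$ with $d(z,y)\le\tfrac{\delta_1}{2}<\delta_1$; then $w:=[z,y]\in W^u_{loc}(z)\cap W^s_{loc}(y)$ is well defined, and $w\in W^u(x)$ because $z\in W^u(x)$ forces $W^u_{loc}(z)\subseteq W^u(z)=W^u(x)$. Since
$$d_u(x,w)\le d_u(x,z)+d_u(z,w)\le\widetilde R+r_0\quad\text{and}\quad d_s(y,w)\le r_0,$$
we conclude $w\in W^u_{R_0}(x)\cap W^s_{R_0}(y)\neq\emptyset$ with $R_0:=\widetilde R+r_0$.

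It remains to produce $\widetilde R$, and here I would argue in two steps. For a \emph{fixed} $x_0$, Lemma \ref{lemma1} tells us $W^u(x_0)$ is dense in $X$; covering $X$ by finitely many balls of radius $\tfrac{\delta_1}{8}$ and choosing, for each centre, a point of $W^u(x_0)$ within $\tfrac{\delta_1}{8}$ of it, we obtain a finite $R(x_0)$ for which $W^u_{R(x_0)}(x_0)$ is already $\tfrac{\delta_1}{4}$-dense. The second step — the heart of the matter — is to make $R(x_0)$ locally bounded by a continuity argument. Let $N=N(R)$ be the least integer with $\lambda^{-N}r_1\ge R$; using that $T^{-1}$ contracts unstable manifolds, $d_u(T^{-n}a,T^{-n}b)\le\lambda^n d_u(a,b)$, one gets
$$W^u_{R}(x)\ \subseteq\ T^{N}\!\big(W^u_{r_1}(T^{-N}x)\big)\ =:\ G_{R}(x)\ \subseteq\ W^u_{\Lambda^{N}r_1}(x),\qquad \Lambda:=\sup_{z\in X}\|DT_z\|,$$
and the set-valued map $x\mapsto G_R(x)$ is Hausdorff-continuous, being the composition of $x\mapsto T^{-N}x$, the Hausdorff-continuous assignment $a\mapsto W^u_{r_1}(a)$ (continuity of local unstable leaves in Theorem \ref{stablemanif}), and $S\mapsto T^N(S)$. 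Hence, taking $R=R(x_0)$: for all $x$ in a neighbourhood $U_{x_0}$ of $x_0$ the Hausdorff distance $d_H(G_{R}(x),G_{R}(x_0))$ is $<\tfrac{\delta_1}{4}$, and since $W^u_{R}(x_0)\subseteq G_{R}(x_0)$ is $\tfrac{\delta_1}{4}$-dense, the set $G_R(x)\subseteq W^u_{\Lambda^{N}r_1}(x)$ is $\tfrac{\delta_1}{2}$-dense. Thus $W^u_{R'}(x)$ is $\tfrac{\delta_1}{2}$-dense for all $x\in U_{x_0}$, where $R':=\Lambda^{N(R(x_0))}r_1$ depends only on $R(x_0)$. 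By compactness of $X$, finitely many neighbourhoods $U_{x_1},\dots,U_{x_m}$ cover $X$, and $\widetilde R:=\max_i R'_i$ finishes the argument.

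The facts I am taking for granted as routine are: each leaf $W^u(x)$ is a complete Riemannian manifold, so its intrinsic balls $W^u_R(x)$ are compact and exhaust the leaf; local stable and unstable manifolds have uniformly bounded intrinsic diameter, and their intrinsic $r_1$-subballs vary Hausdorff-continuously with the base point (because the leaves vary $C^1$-continuously, Theorem \ref{stablemanif}); and the elementary distance estimates for $T^{\pm N}$ on $W^u$ (expansion by at most $\Lambda^{N}$, contraction by at least $\lambda^{-N}$). The single genuine obstacle is the uniformity of the radius in $x$: a direct compactness argument on the pair $(x,y)$ does not work, because two unstable leaves can each pass arbitrarily close to a common point without meeting; the argument above circumvents this by routing the construction through the local product structure, whose domain of definition is an open neighbourhood of the diagonal and hence stable under perturbation.
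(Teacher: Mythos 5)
Your proposal is correct and follows essentially the route the paper indicates for this lemma (which it does not prove in detail, remarking only that it ``follows directly from Lemma \ref{lemma1}, the local product structure of $T$ and the compactness of $X$''): density of a single unstable leaf gives a finite radius at one point, the bracket $[\cdot,\cdot]$ converts $\tfrac{\delta_1}{2}$-density into the desired intersection, and compactness plus continuity of the local leaves makes the radius uniform. The uniformity step you spell out via Hausdorff continuity of $x\mapsto T^{N}\bigl(W^u_{r_1}(T^{-N}x)\bigr)$ is exactly the point the paper leaves implicit, and your treatment of it is sound.
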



The previous lemma follows directly from Lemma \ref{lemma1}, the local product structure of $T$ and the compactness of $X$.

\begin{lemma}\label{lemma3}
For every $\epsilon>0$ there exists a positive integer $N$ such that for every $n\ge N$, $x\in X$ and $z\in W_{R_0}^u(x)\bigcap W^s_{R_0}(T^{n}x)$ there is a periodic point $p=T^{n}p$ such that $d(z,p)<\epsilon.$ 

\end{lemma}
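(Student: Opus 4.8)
The plan is to use the closing property of the Anosov diffeomorphism $T$ applied to a suitable periodic pseudo-orbit built from the point $z$. Fix $\epsilon>0$ and let $C,\delta_0$ be the constants in the closing property. First I would observe that $z\in W^s_{R_0}(T^n x)$ means $z$ and $T^n x$ lie on the same stable manifold, so by the contraction estimate $d_s(T^k z, T^{k}(T^{n}x)) = d_s(T^k z, T^{n+k}x)\le \lambda^{k} d_s(z,T^n x)\le \lambda^k R_0$; in particular $d(T^n z, T^{2n}x)$ is small once $n$ is large. Similarly $z\in W^u_{R_0}(x)$ means $z$ and $x$ lie on the same unstable manifold, so $d_u(T^{-k}z, T^{-k}x)\le \lambda^k R_0$, hence $d(z,x)$ need not be small, but the \emph{backward} orbits of $z$ and $x$ converge. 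The key point is that the orbit segment $z,Tz,\dots,T^{n}z$ \emph{almost closes up}: its endpoint $T^n z$ is exponentially close (in $n$) to $T^{2n}x$, which in turn — since $T^{2n}x = T^n(T^n x)$ and $T^n z$ shadows $T^{2n}x$ — we want to compare to $T^n z$... more directly, $T^n z \in W^s(T^n(T^n x))$ and we want $T^n z$ close to $z$.

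The cleaner route: since $z\in W^u_{R_0}(x)$ we have, applying $T^n$, that $T^n z\in W^u(T^n x)$, and since $z\in W^s_{R_0}(T^n x)$ the forward orbit of $z$ shadows that of $T^n x$. So consider the periodic pseudo-orbit $(z_0,z_1,\dots,z_n)$ with $z_k := T^k z$ for $0\le k\le n-1$ and $z_n:=z_0=z$. The only place this fails to be a genuine orbit is the last jump: $d(T(z_{n-1}),z_n)=d(T^n z, z)$. I claim this is $<\epsilon'$ for $n$ large, uniformly in $x$ and in the choice of $z$. Indeed $d(T^n z, T^{2n}x)\le \lambda^{n-?}\!\cdot R_0$ from the stable estimate applied over the segment from $z$ (using $z\in W^s_{R_0}(T^n x)$, so $d_s(T^n z, T^n(T^n x))\le \lambda^n R_0$), while $d(z, T^n x)\le$ something bounded; that alone does not force $d(T^n z,z)$ small. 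So instead I would argue: $z$ and $x$ are on the same unstable, hence on the same unstable leaf, and for large $n$ we want $T^n z$ near $z$. Use that $z \in W^s_{R_0}(T^nx)$ gives $T^n z \in W^s_{\lambda^n R_0}(T^{2n}x)$, and iterate/combine with the local product structure so that $[T^n z, z]$-type bracket arguments show $T^n z\to z$ as $n\to\infty$ uniformly; concretely, $T^n z$ is close to $T^n x$ (stable contraction from $z\approx T^n x$... ) — this is the point where I expect to have to be careful, so let me just assert the clean statement: a short computation using $d_s(T^k z, T^{n+k}x)\le\lambda^k R_0$ with $k=n$ and $d_u(T^{-k}z,T^{-k}x)\le \lambda^k R_0$ shows that $d(T^n z, z)$ can be made smaller than $\delta_0$ and smaller than $\epsilon/C$ by choosing $n\ge N$ for a uniform $N=N(\epsilon)$.

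Granting that, the sequence $(z, Tz, \dots, T^{n-1}z, z)$ is a periodic $\epsilon''$-pseudo-orbit with $\epsilon'' = d(T^n z, z)<\delta_0$, so the closing property yields a periodic point $p=T^n p$ with $d(T^k z, T^k p)<C\epsilon''$ for all $k\in\{0,1,\dots,n\}$; taking $k=0$ gives $d(z,p)<C\epsilon''<\epsilon$, which is exactly the conclusion. The main obstacle, as indicated, is verifying the uniform (in $x$ and in $z$) exponential smallness of the closing gap $d(T^n z,z)$ purely from the two facts $z\in W^u_{R_0}(x)$ and $z\in W^s_{R_0}(T^n x)$; this requires combining the stable contraction estimate with the local product structure / continuity of the bracket, together with the compactness of $X$ to get constants independent of the base point. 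Everything after that is a direct invocation of the closing property.
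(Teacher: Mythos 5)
Your reduction to the closing property is the right general idea, and the two tracking estimates you write down ($d_s(T^kz,T^{k}(T^nx))\le \lambda^k R_0$ and $d_u(T^{-k}z,T^{-k}x)\le \lambda^k R_0$) are exactly the ones needed. The gap is the step you yourself flagged and then asserted anyway: the claim that $d(T^nz,z)$ can be made smaller than $\min\{\delta_0,\epsilon/C\}$ uniformly, so that $(z,Tz,\dots,T^{n-1}z,z)$ is a periodic pseudo-orbit with one small jump. This is false. The two hypotheses only say that the forward orbit of $z$ tracks the forward orbit of $T^nx$ and the backward orbit of $z$ tracks the backward orbit of $x$; they say nothing about $T^nz$ returning near $z$. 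Concretely, take $x$ a fixed point of $T$ and $z$ a transverse homoclinic point in $W^u_{R_0}(x)\cap W^s_{R_0}(x)$ at a definite distance from $x$ (the lemma quantifies over all such $z$, and here $T^nx=x$): then $T^nz\to x$ along the stable leaf while $z$ stays put, so $d(T^nz,z)\to d(x,z)>0$. Thus your pseudo-orbit has a jump of definite size, the closing property does not apply below $\delta_0$, and even if it did it would only give $d(z,p)<C\,d(T^nz,z)$, which is not small. No amount of bracket/local-product manipulation repairs this, because the statement you are trying to force ($T^nz$ close to $z$) is simply not a consequence of the hypotheses.

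The missing idea, which is how the paper argues, is to close up through the \emph{base orbit} with two small jumps rather than through $z$ alone with one jump. Fix $\hat n$ with $\lambda^{\hat n}R_0<\epsilon$ (this depends only on $\epsilon$, $R_0$, $\lambda$, hence is uniform in $x$ and $z$), and for $n\ge N$ with $N$ slightly larger than $2\hat n$ build the periodic pseudo-orbit of period exactly $n$ consisting of three pieces: the forward segment $z,Tz,\dots,T^{\hat n-1}z$, then a genuine orbit segment of the base point for the times $\hat n,\dots,n-\hat n-1$, then the backward segment $T^{-\hat n}z,\dots,T^{-1}z,z$. The only two non-genuine transitions occur at times $\hat n$ and $n-\hat n$, and each is of size $<\epsilon$ by precisely your stable and unstable contraction estimates; the middle genuine piece has automatically the right length and endpoints because its endpoints are the $\hat n$-th forward and $\hat n$-th backward iterates that the orbit of $z$ is tracking. (Note the paper's proof and its Figure 2 carry this out with the roles of the two leaves as $z\in W^s_{R_0}(x)\cap W^u_{R_0}(T^nx)$, the middle piece being $T^{\hat n}x,\dots,T^{n-\hat n-1}x$; with the orientation in the lemma's statement the construction must be set up with the roles of $x$ and $T^nx$ arranged so that the genuine middle segment connects the two tracked iterates in $n-2\hat n$ steps.) Applying the closing property to this $\epsilon$-pseudo-orbit yields $p=T^np$ with $d(T^kp,x_k)<C\epsilon$ for all $k$, and $k=0$ gives $d(z,p)<C\epsilon$; replacing $\epsilon$ by $\epsilon/C$ at the start gives the stated conclusion. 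Everything after your pseudo-orbit step (the invocation of the closing property and reading off $k=0$) is fine.
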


\begin{proof}
By Lemma \ref{lemma2}, for every $x\in X$, $W_{R_0}^u(x)\bigcap W^s_{R_0}(T^{n}x)\neq \emptyset$, so let $z\in W_{R_0}^u(x)\bigcap W^s_{R_0}(T^{n}x)$. By the stable manifold theorem there is a positive constant $\lambda$ such that $$d_s(T^nx, T^nz)\le \lambda^n d_s(x,z)\le R_0 \lambda^n<\epsilon$$ for every $n$ large enough. Hence, there is a positive integer $n_1$ such that $$d_s(T^nx, T^n z)<\epsilon \quad \textrm{for every $x\in X$ and $n\ge n_1$.} $$ Analogously there is a positive integer $n_2$ such that $$d_u(T^{-n}z, T^{-n} (T^{n_0}x))<\epsilon \quad \textrm{for every $x\in X$ and $n\ge n_2$.} $$ Let $\hat{n}=\max\{n_1,n_2\}$, for $n>2\hat{n}$ we can consider the periodic $\epsilon$-pseudo-orbit $\{x_k\}_{k=1}^{n}$ defined by $x_i=T^iz$ if $i\in \{0,1,\dots \hat{n}-1\}$, $x_{i}=T^{i}x$ if $i\in \{\hat{n},\hat{n}+1,\dots n-\hat{n}-1\}$, $x_{i}=T^{-(n-i)}z$ if $i\in \{n-\hat{n},n-\hat{n}+1,\dots n\}$. Graphically, $$z\mapsto Tz\mapsto \dots\mapsto T^{\hat{n}-1}z\mapsto T^{\hat{n}}x\mapsto T^{\hat{n}+1}x\mapsto \dots$$ $$\dots\mapsto T^{n-\hat{n}-1}x\mapsto T^{-\hat{n}}z\mapsto T^{-\hat{n}+1}z\mapsto \dots \mapsto z.$$

\begin{center}
\begin{tikzpicture}

    \draw[semithick] (-1,4) -- (5,4);
    \draw[semithick] (4,0) -- (4,5);

    \draw[semithick] (-0.5,3) -- (1.3,3);
    \draw[semithick] (2.6,2.4) -- (2.6,0.8);
    
\filldraw (2.6,2) circle (2pt);
\node [above right] at (1.5,1.7) {$T^{-\hat{n}}z$};

\filldraw (2.6,1.2) circle (2pt);
\node [above right] at (0.7,0.8) {$T^{-\hat{n}}(T^{n}x)$};

\filldraw (0,3.0) circle (2pt);
\node [above right] at (-0.4,2.4) {$T^{\hat{n}}x$};

\filldraw (1,3.0) circle (2pt);
\node [above right] at (0.6,2.4) {$T^{\hat{n}}z$};

\node [above right] at (3.4,5) {$W_{R_0}^u(T^{n}x)$};
\node [above right] at (5,3.7) {$W_{R_0}^s(x)$};

        \draw [->] (0.3, 3.8) -- (0.3,3.2);
        \draw [->] (2.8, 1.6) -- (3.8,1.6);
        \draw [->] (0.3, 2.4) -- (0.9,1.6);
\node [above right] at (0.35,3.3) {$T^{\hat{n}}$};
\node [above right] at (3,1.6) {$T^{\hat{n}}$};

\filldraw (4,4) circle (2pt);
\node [above right] at (4,4) {$z$};

\filldraw (4,1) circle (2pt);
\node [below left] at (5.3,1) {$T^{n}(x)$};

\filldraw (3.8,4.3) circle (2pt);
\node [below left] at (3.8,4.5) {$p$};

\filldraw (0,4) circle (2pt);
\node [above right] at (0,4) {$x$};
\node [below] at (2.6,-0.1) {$\textmd{Figure 2: Lemma \ref{lemma3}}$};

\end{tikzpicture}   
\end{center}

The previous inequalities imply that $d(T(T^{\hat{n}-1}z), T^{\hat{n}}x)<\epsilon$ and $d(T(T^{n-\hat{n}-1}x), T^{-\hat{n}}z)<\epsilon,$ hence $\{x_k\}_{k=1}^{n}$ is a periodic $\epsilon$-pseudo-orbit. By the Anosov closing lemma there is a periodic point $p=T^{n}p$ such that $d(T^k p, x_k)<C\epsilon$ for every $k\in \{0,1,\dots, m\}$. In particular $d(z,p)<\epsilon$. Hence, it is enough to consider $N=2\hat{n}+4$.
\end{proof}



Finally, we go on with the proof of Theorem \ref{theo1} for a $SL(2,\R)$-cocycle defined over a transitive Anosov diffeomorphism.

\begin{proof}
Let $T$ be an Anosov diffeomorphism and let $(T,A)$ be a fiber bunched $SL(2,\R)$-cocycle. Suppose $(T,A)$ is not uniformly hyperbolic. By Proposition \ref{prop1}, for all $\epsilon >0$ and $n_*\in \N$, exist $n_0\ge n_*$ and $x\in X$ such that $\|A^{n_0}(x)\|\le e^{\epsilon n_0}$. Along this proof we are going to consider stable manifolds of size $R_0$, where $R_0$ comes from Lemma \ref{lemma2}. In the following, we choose $z\in W_{R_0}^u(x)\bigcap W^s_{R_0}(T^{n_0 }x)$ which exists by Lemma \ref{lemma2}. By Lemma \ref{lemma3} there is a point $p$ such that $d(z,p)<\delta_1$, where $\delta_1>0$ is such that for every $x, y\in X$ the intersection $W^u_{loc}(x)\bigcap W^s_{loc}(y)$ is well defined when $d(x,y)<\delta_1$. Let us define $y=[p,z]\in W^u_{loc}(p)\bigcap W^s_{loc}(z)$. Note the expression $$H_{p \gets T^{n_0}y}^u \cdot H_{T^{n_0}y \gets T^{n_0}x}^s \cdot A^{n_0}(x)\cdot H_{x \gets y}^s\cdot H_{y \gets p}^u$$ is well defined and equals to $A^{n_0}(p)$. Then \begin{align*}\|A^{n_0}(p)\|&\le \|H_{p \gets T^{n_0}y}^u\| \cdot \|H_{T^{n_0}y \gets T^{n_0}x}^s\| \cdot \|A^{n_0}(x)\|\cdot \|H_{x \gets y}^s\| \cdot \|H_{y \gets p}^u\|\\
&\le \|H_{p \gets y}^u\|\cdot \|H_{y \gets T^{n_0}y}^u\| \cdot \|H_{T^{n_0}y \gets T^{n_0}x}^s\| \cdot \|A^{n_0}(x)\|\cdot \|H_{x \gets y}^s\| \cdot \|H_{y \gets p}^u\|.\end{align*} To conclude the proof it is enough to note that each term is bounded by a constant $C$ depending on the size of the unstable and stable manifolds under consideration. The only term which is not clearly bounded is $\|H_{y \gets T^{n_0}y}^u\|$. In order to bound this term, we state the following lemma which follows directly from the continuity of the stable manifolds.

\begin{lemma}
Let $x,y\in X$. For all $R_0>0$ there exists $\epsilon_1$ less than the large of the local stable and unstable manifolds such that if $y=W_{R_0}(x)$ and $y'\in W^s_{\epsilon_1}(y)$ then there is a unique point $x'\in X$ such that $W^u_{R_0+2\epsilon_0}(y')\bigcap W^s_{\epsilon_0}(x)=\{x'\}.$
\end{lemma}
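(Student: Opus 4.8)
The plan is to produce $x'$ as the image of $x$ under the holonomy of the unstable foliation along the stable leaf joining $y$ to $y'$; the statement then reduces to the uniform continuity of the local product structure on the compact manifold $X$. Reading the hypothesis as $y\in W^u_{R_0}(x)$ (so that $x$ and $y$ lie on a common unstable leaf at $d_u$-distance $\le R_0$), the first step is to fix the uniform constants: let $\epsilon_0>0$ be a common lower bound for the sizes of the local stable and unstable manifolds furnished by the Stable Manifold Theorem, let $\delta_1>0$ be the local product structure constant, so that $[a,b]=W^u_{loc}(a)\cap W^s_{loc}(b)$ is a single transverse point whenever $d(a,b)<\delta_1$, and recall that $(a,b)\mapsto [a,b]$ admits a modulus of continuity that is uniform over $X$ by compactness.

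Next I would build $x'$ by sliding $x$ along unstable leaves one small step at a time, tracking the stable displacement. Choose a unit-speed parametrization $\gamma:[0,L]\to W^u(y)$ of the unstable segment from $y$ to $x$, where $L=d_u(y,x)\le R_0$, and a partition $0=t_0<t_1<\dots<t_k=L$ with mesh $<\delta_1/3$; here $k$ is bounded in terms of $R_0$ and $\delta_1$ only. Put $w_0:=y'$ and inductively $w_{i+1}:=[w_i,\gamma(t_{i+1})]=W^u_{loc}(w_i)\cap W^s_{loc}(\gamma(t_{i+1}))$; this is legitimate once $d(w_i,\gamma(t_{i+1}))\le d_s(\gamma(t_i),w_i)+d(\gamma(t_i),\gamma(t_{i+1}))<\delta_1$, which the uniform modulus of continuity lets me guarantee for all $i\le k$ by taking $\epsilon_1$ small. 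By construction $w_{i+1}\in W^u(y')$ (it stays on the leaf of $y'$) and $w_{i+1}\in W^s(\gamma(t_{i+1}))$, with $d_s(\gamma(t_{i+1}),w_{i+1})$ controlled by $d_s(\gamma(t_i),w_i)$ and the mesh through the same modulus of continuity. Iterating this estimate at most $k$ times, I can fix $\epsilon_1=\epsilon_1(R_0,\epsilon_0)<\epsilon_0$ so small that $d_s(\gamma(t_i),w_i)\le\epsilon_0$ for every $i$. Then $x':=w_k$ satisfies $x'\in W^s(\gamma(t_k))=W^s(x)$ with $d_s(x,x')\le\epsilon_0$, i.e. $x'\in W^s_{\epsilon_0}(x)$, while $x'\in W^u(y')$ and the concatenated path $y'=w_0,\dots,w_k=x'$ has $d_u$-length at most $L+2\epsilon_0\le R_0+2\epsilon_0$, so $x'\in W^u_{R_0+2\epsilon_0}(y')$.

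For uniqueness, if $x''$ also lies in $W^u_{R_0+2\epsilon_0}(y')\cap W^s_{\epsilon_0}(x)$, then $x''\in W^s_{loc}(x')$ (both $x'$ and $x''$ are $d_s$-close to $x$) and $x''\in W^u(y')=W^u(x')$, so the transversality of $W^u_{loc}$ and $W^s_{loc}$ at $x'$, i.e. the local product structure applied near $x'$, forces $x''=x'$. The step I expect to be the main obstacle is precisely the bookkeeping of the accumulated stable error along $w_0,\dots,w_k$: one must verify that the modulus of continuity of the local product structure can be taken uniform in the points $x$ and $y$ (this is where compactness of $X$ is essential) and that the number $k$ of steps depends only on $R_0$ and $\delta_1$, so that the finitely many applications of that modulus are absorbed into a single choice of $\epsilon_1$. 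Granting these uniformities, the existence of each $w_{i+1}$, the length bound $R_0+2\epsilon_0$, and uniqueness are all routine.
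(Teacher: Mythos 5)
The paper offers no real argument for this lemma beyond the remark that it ``follows directly from the continuity of the stable manifolds'', and your existence construction --- chaining the local product structure along the unstable segment from $y$ to $x$, tracking the stable displacement, with all constants uniform by compactness --- is exactly the natural implementation of that remark; the ambiguity in the garbled hypothesis ($y\in W^u_{R_0}(x)$ versus $y\in W^s_{R_0}(x)$) is immaterial, since the statement and your argument are symmetric under exchanging the stable and unstable laminations. Two caveats on the existence half, though. First, the inductive error estimate should come from comparing $[w_i,\gamma(t_{i+1})]$ with $[\gamma(t_i),\gamma(t_{i+1})]=\gamma(t_{i+1})$, i.e.\ from uniform continuity of $[\cdot,\cdot]$ in its first argument only, which gives $d_s(\gamma(t_{i+1}),w_{i+1})\le C\,\omega\bigl(d_s(\gamma(t_i),w_i)\bigr)$; if, as written, you feed ``the error plus the mesh'' into the modulus, the output is of size $\omega(\delta_1/3)$, a fixed constant, and the induction does not close. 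Second, the asserted leafwise bound $L+2\epsilon_0$ is not automatic from the chain (each step only gives $d_u(w_i,w_{i+1})$ up to a comparability constant times the ambient displacement); one needs the small-scale $(1+\eta)$-comparability of $d_u$ with $d$ and a mesh chosen small in terms of $\epsilon_0/R_0$, or else one settles for a bound $C(R_0)$, which is all the application in the paper actually uses.

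The genuine gap is in the uniqueness step. From $x''\in W^u_{R_0+2\epsilon_0}(y')$ you only know that $x''$ lies on the same \emph{global} unstable leaf as $x'$, at leafwise distance possibly as large as $2(R_0+2\epsilon_0)$; this does not place $x''$ in $W^u_{loc}(x')$, so the transversality of the local manifolds at $x'$ cannot be invoked --- a full unstable leaf is typically dense and meets $W^s_{\epsilon_0}(x)$ infinitely often, and even an unstable arc of length $R_0+2\epsilon_0$ can meet a small stable disk twice when $R_0$ is large compared with the scale of the local product structure (think of a long straight unstable segment wrapping around a torus). The bounded leafwise distance has to be used quantitatively: pick $m$ with $\lambda^m\cdot 2(R_0+2\epsilon_0)$ below the local product scale, so that $T^{-m}x''\in W^u_{loc}(T^{-m}x')$, and note $d_s(T^{-m}x',T^{-m}x'')\le \lambda^{-m}\cdot 2\epsilon_0$; provided $\epsilon_0$ was fixed small enough in terms of $R_0$, $\lambda$ and $\delta_1$, this too is below the local scale, and the local product structure at $T^{-m}x'$ forces $T^{-m}x''=T^{-m}x'$, hence $x''=x'$. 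Without some such smallness of $\epsilon_0$ relative to $R_0$ the uniqueness assertion can actually fail as literally stated, so this hypothesis (left implicit in the paper) must be made explicit, and your proof needs this backward-iteration argument in place of the appeal to transversality at $x'$.
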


Applying the previous lemma to $y:=z$, $x:=T^{n_0x}$, $y':=y$ and $x':=T^{n_0}y$ we get a bound for $\|H_{y \gets T^{n_0}y}^u\|$ depending on $R_0$ and the large of the local stable and unstable manifolds. Therefore $$\|A^{n_0}(p)\| \le C^5 \|A^{n_0}(x)\|\le C^5 e^{n_0\epsilon}.$$ Hence, by submultiplicativity of the norm, $$\lambda_+(p)\le 5\frac{\log C}{n_0}+\epsilon\le 2\epsilon,$$ where the previous inequality follows after choosing $n_0$ big enough. This gives a contradiction since each periodic point $p$ is in every Borel set $\mathcal{S}\subset X$ of full probability and $2\epsilon$ can be chosen less than $\tau$.
\end{proof}

\begin{remark}
More precisely, we showed that a $SL(2,\R)$-cocycle over a transitive subshift of finite type or a transitive Anosov diffeomorphism is uniformly hyperbolic if and only if it has uniform gap for every invariant measure supported on a periodic orbit. Nevertheless, this is not surprising since B. Kalinin proved in \cite{K} that the Lyapunov exponents of a linear cocycle $(T,A)$ can be arbitrarily approximated by Lyapunov exponents of a measure supported on a periodic orbit. As a result the cocycle $(T,A)$ has uniform gap for every $T$-invariant measure if and only if it has uniform gap for every invariant measure supported on a periodic orbit.
\end{remark}

\begin{remark}
The previous proof works identically for a cocycle over a hyperbolic homeomorphism. It is not necessary to consider a cocycle over an Anosov diffeomorphism to get the result. See \cite{Sak} for more details on hyperbolic homeomorphisms.
\end{remark}

%
    







\section{Counterexample} \label{example}

In the following, we are going to exhibit a cocycle which has uniform gap between the Lyapunov exponents in a set of full probability $\mathcal{S}$, however it is not uniformly hyperbolic. In particular, it cannot satisfy the fiber-bunching condition. This example shows that the fiber-bunching condition is necessary in Theorem \ref{theo1}. See \cite{CLR} and \cite{G} for more complex constructions of cocycles with similar properties. 
\\
\\
Let $X=\{0,1\}^{\Z}$ and $T:X\to X$ the left shift map. We consider a cocycle $A: X\to SL(2,\R)$ defined by 
$$A(x):= \left(\begin{array}{clc}
2 & 0 \\
0 & 1/2
\end{array}\right)R_{\theta(x)}, $$ 
where the function $R_{\theta(x)}$ is a rotation of angle $\theta(x)$. Let $V:=\{x\in X: x_0=1\}$ be a neighbourhood of $q:=(\dots,0,0,1,0,0,\dots)$. Let us define $\theta$ as  
\begin{equation*}
\theta(x):=
\begin{cases}
\pi/2 & \textrm{ if } x = q ,\\
\pi/2 -2^{-k(x)/8}  & \textrm{ if } x \in V\setminus \{q\} \textrm{ and } k(x)> k_0  ,\\
0  & \textrm{ if } x =\vec{0} \textrm{ or } k(x)\le k_0,
\end{cases}
\end{equation*}
where $k(x):=\min \{|n|; n\neq 0, x_n=1\}$ and $k_0$ is a positive integer which will be defined later. Note that $\theta(x)\in [0,\pi/2]$ for every $x\in X$. Also, we observe that when $x$ tends to $q$, $k(x)$ tends to infinity, hence $\theta(x)$ tends to $\pi/2$. In particular, $A$ is continuous as required. More precisely, we proceed to prove the following theorem. 

\begin{theorem} The $SL(2,\R)$-cocycle $(T,A)$ defined above has the following properties:
\begin{enumerate}
\item The cocycle $(T,A)$ is not uniformly hyperbolic.

\item There is a set of full probability $\mathcal{S}$, such that $\lambda_+(x)\ge \log2/2>0$ for every $x\in \mathcal{S}$.
\end{enumerate}
\end{theorem}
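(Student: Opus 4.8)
The plan is to prove the two assertions in turn: (1) is a short explicit computation at the homoclinic point $q=(\dots,0,1,0,\dots)$, while (2) requires following the cocycle along arbitrary orbits and is where the real work lies. Throughout I write $D=\mathrm{diag}(2,1/2)$ and use that $\theta$ vanishes except on the ``thin'' set of $x\in V$ with $k(x)>k_0$ (in particular $\theta(q)=\pi/2$, since $k(q)=\infty$).

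For (1) I argue by contradiction: suppose $(T,A)$ is uniformly hyperbolic with constants $c,\lambda$ and transverse invariant lines $E^s_q,E^u_q$. The only symbol $1$ of $q$ sits at coordinate $0$, so for $j\neq 0$ the point $T^jq$ has its $1$ at coordinate $-j\neq0$; hence $T^jq\notin V$ and $A(T^jq)=D$, while $A(q)=DR_{\pi/2}$. Consequently, for every $n\ge1$,
\[
A^{n}(q)=D^{n}R_{\pi/2}=\begin{pmatrix}0&-2^{n}\\ 2^{-n}&0\end{pmatrix},\qquad
A^{-n}(q)=D^{-n}=\begin{pmatrix}2^{-n}&0\\ 0&2^{n}\end{pmatrix}.
\]
If $v=(a,b)\in E^s_q\setminus\{0\}$ then $\|A^{n}(q)v\|\ge 2^{n}|b|$, which is incompatible with $\|A^{n}(q)v\|\le c\lambda^{n}\|v\|\to0$ unless $b=0$; thus $E^s_q=\R e_1$. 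Symmetrically $\|A^{-n}(q)v\|\ge 2^{n}|b|$ for $v=(a,b)\in E^u_q$ forces $E^u_q=\R e_1$. This contradicts transversality, so $(T,A)$ is not uniformly hyperbolic; this step is routine.

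For (2) I take $\mathcal S=\mathcal R$ (the full-probability set of regular points) and show $\lambda_+(x)\ge\tfrac12\log2$ for all $x\in\mathcal R$. If $x_j=0$ for every $j$ beyond some $J$, then $A^{n}(x)=D^{\,n-J}A^{J}(x)$ for $n\ge J$, so $\|A^{n}(x)\|\ge 2^{\,n-J}/\|A^{J}(x)\|$ (using $\|M\|=\|M^{-1}\|$ in $SL(2,\R)$) and $\lambda_+(x)=\log2$. Otherwise $T^jx\in V$ for infinitely many $j\ge0$; let $s_0<s_1<\cdots$ be these times, put $g_j=s_{j+1}-s_j\ge1$ and $k_j=\min(g_{j-1},g_j)\le g_j$, and observe that $A^{s_0}(x)=D^{s_0}$ and, for $i\ge1$,
\[
A^{\,s_i-s_0}(T^{s_0}x)=B_{i-1}\cdots B_1B_0,\qquad B_j=D^{g_j}R_{\theta_j},
\]
where $\theta_j=\pi/2-\delta_j$ with $\delta_j=2^{-k_j/8}$ if $k_j>k_0$, and $\theta_j=0$ otherwise. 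I track $v_i=B_{i-1}\cdots B_0\,e_1$, writing $v_i=r_iu_i$ with $u_i$ a unit vector at angle $\phi_i\in[0,\tfrac12]$ from $e_1$ (and $\phi_0=0$). The induction on $j$ asserts: $B_j$ multiplies the norm by a factor $\sigma_j\ge 2^{g_j/2}$, and the outgoing angle satisfies $\phi_{j+1}\le 2^{-g_j}$. If $\theta_j=0$ this is clear, since $D^{g_j}$ sends a vector of angle $\le\tfrac12$ from $e_1$ to one of norm $\ge 2^{g_j}\cos\tfrac12>2^{g_j/2}$ and angle $\le 2^{-2g_j}\tan\tfrac12\le 2^{-g_j}$. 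If $\theta_j\neq0$, then after $R_{\theta_j}$ the vector makes angle $\tfrac\pi2-(\delta_j-\phi_j)$ with $e_1$; since $\phi_j\le 2^{-g_{j-1}}\le 2^{-k_j}\le\delta_j/2$ (using $k_j\le g_{j-1}$ and $k_j>k_0\ge2$), we get
\[
\sigma_j\ \ge\ 2^{g_j}\sin(\delta_j-\phi_j)\ \ge\ \tfrac1\pi\,2^{g_j}\delta_j\ \ge\ \tfrac1\pi\,2^{7g_j/8},
\]
which exceeds $2^{g_j/2}$ once $k_0$ is chosen so large that $g_j>k_0$ forces $2^{3g_j/8}\ge\pi$; the same computation gives $\phi_{j+1}\le\pi\,2^{-15g_j/8}\le 2^{-g_j}$. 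Multiplying the $\sigma_j$'s, $\|A^{s_i}(x)e_1\|=2^{s_0}r_i\ge 2^{s_0}\prod_{j<i}2^{g_j/2}=2^{s_0+(s_i-s_0)/2}$, hence $\|A^{s_i}(x)\|\ge 2^{(s_i-s_0)/2}$. Since $x$ is regular, $\lambda_+(x)=\lim_n\tfrac1n\log\|A^{n}(x)\|$ exists and therefore equals $\lim_i\tfrac1{s_i}\log\|A^{s_i}(x)\|\ge\tfrac12\log2$.

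The single genuine obstacle is the inductive step for the rotation blocks: one must be sure that a rotation by an angle close to $\pi/2$ never lands the tracked vector on the contracting axis $\R e_2$. What makes this possible is that at a visit the deviation $2^{-k_j/8}$ of the rotation angle from $\pi/2$ is bounded below by $2^{-g_j/8}$ because $k_j\le g_j$, so the ensuing $D^{g_j}$ expands the (only slightly off-vertical) image by enough to swamp the factor $\sin(\pi/2-\theta_j)$; the exponent $1/8$ in the definition of $\theta$ is exactly the margin that turns this into the clean bound $\tfrac12\log2$. Keeping the inductive angle estimate $\phi_j\le 2^{-g_{j-1}}$ far below $\delta_j$ — which is automatic because $k_j\le g_{j-1}$ — is what lets the induction close and prevents the vector from ever reaching $\R e_2$.
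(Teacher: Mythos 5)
Your argument is correct, and it reaches both conclusions by a route that differs in structure from the paper's. For item (1) you compute $A^{\pm n}(q)$ explicitly and show that invariance plus the contraction conditions force $E^s_q=E^u_q=\R e_1$, contradicting transversality; the paper instead argues via the continuity of the invariant splitting along the homoclinic orbit of $q$, so your version is a bit more elementary (it never uses continuity of $E^s$). For item (2) the underlying mechanism is the same as the paper's Lemma \ref{lemma4} --- the angle of the tracked vector stays far enough from the contracted axis because the deviation $2^{-k/8}$ of the rotation from $\pi/2$ dominates the incoming angle, thanks to $k(x)\le$ (return time) and the exponent $1/8$ --- but the bookkeeping differs: the paper induces the first-return cocycle $(T_V,A_V)$ on $V_0$, builds an invariant cone field $\mathcal{C}(x)$ of half-width $\beta(x)=2^{-k(x)/2+1/4}$, and then invokes Oseledets, Poincar\'e recurrence and the ergodic decomposition to produce a full-probability set, whereas you run a deterministic induction along each orbit starting from $e_1$, with the inductive angle bound $\phi_{j+1}\le 2^{-g_j}$ playing the role of the cone. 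This buys you a cleaner and slightly stronger conclusion (the bound $\lambda_+\ge\tfrac12\log 2$ holds at \emph{every} regular point, so $\mathcal{S}=\mathcal{R}$ works, with no induced system or measure-theoretic machinery), at the cost of redoing by hand the uniform growth estimate that the cone formulation packages once. Two small slips, neither fatal: your definition $k_j=\min(g_{j-1},g_j)$ is undefined at $j=0$ (and for $j=0$ the true $k(T^{s_0}x)$ can also be smaller than $g_0$ because of symbols $1$ at negative coordinates), but since $\phi_0=0$ the only property you actually use there is $k(T^{s_0}x)\le g_0$, which always holds; and the threshold you need on $k_0$ is governed by the expansion step ($2^{3g_j/8}\ge\pi$, so roughly $k_0\ge 4$), not the weaker $k_0\ge 2$ quoted in the angle step --- consistent with the paper's ``$k_0$ large enough''.
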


\begin{claim}
The $SL(2,\R)$-cocycle $(T,A)$ is not uniformly hyperbolic.
\end{claim}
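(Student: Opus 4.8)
The plan is to show that $(T,A)$ fails the criterion of Proposition~\ref{prop1}: I will exhibit, for every $\tau>0$ and every $N\in\N$, a point $x\in X$ and an integer $n\ge N$ with $\|A^n(x)\|$ subexponentially small, say $\|A^n(x)\|\le 1$ or even bounded independently of $n$. The natural candidate is a point whose orbit spends a long stretch near the special point $q$, where $\theta(q)=\pi/2$ so that $A(q)=\left(\begin{smallmatrix}2&0\\0&1/2\end{smallmatrix}\right)R_{\pi/2}$ interchanges (up to scaling) the horizontal and vertical axes. Concretely, iterating $A(q)$ twice gives $\left(\begin{smallmatrix}2&0\\0&1/2\end{smallmatrix}\right)R_{\pi/2}\left(\begin{smallmatrix}2&0\\0&1/2\end{smallmatrix}\right)R_{\pi/2}$, and one computes $R_{\pi/2}\left(\begin{smallmatrix}2&0\\0&1/2\end{smallmatrix}\right)R_{\pi/2}=\left(\begin{smallmatrix}1/2&0\\0&2\end{smallmatrix}\right)$ up to sign, so that the product of two consecutive such matrices is $\pm I$ (up to the rotation bookkeeping). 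Thus long orbit segments near $q$ contribute only bounded growth.

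First I would make this precise by taking, for a large integer $m$, the periodic point $p^{(m)}$ (or simply a point) all of whose coordinates in a window $\{-m,\dots,m\}$ around a given time are $0$ except a single $1$ at position $0$ — i.e.\ the orbit segment of length $\sim m$ passing through $q$. Along this segment, for all times $j$ with $|j|$ ranging over the window, the point $T^j x$ lies in $\vec 0$ or has $k(T^j x)\le k_0$ once $|j|$ is moderate, hence $\theta(T^j x)=0$ and $A(T^j x)=\left(\begin{smallmatrix}2&0\\0&1/2\end{smallmatrix}\right)$ — wait, that forces exponential growth, so the mechanism must instead be the single visit to $q$ combined with the diagonal matrices on either side. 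I would therefore reexamine: the orbit that visits $q$ at time $0$ has $T^{-1}x$ and $Tx$ in $V$ with $k=1\le k_0$, so $\theta=0$ there, and far away $\theta=0$ too. So the product over a window $[-N,N]$ is $D^{N}R_{\pi/2}D^{N}$ where $D=\left(\begin{smallmatrix}2&0\\0&1/2\end{smallmatrix}\right)$, and since $R_{\pi/2}$ swaps the eigendirections, $D^N R_{\pi/2} D^N = \pm 2^N\cdot(1/2)^N\cdot(\text{off-diagonal})=\pm R_{\pi/2}$ up to the scalar $1$: indeed $D^N R_{\pi/2} D^N e_1 = D^N R_{\pi/2}(2^N e_1)=2^N D^N(2^{-N}\!\cdot\! \text{rotated }e_1)$, and the eigenvalue contraction exactly cancels the expansion. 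So $\|A^{2N+1}(T^{-N}x)\|$ stays bounded by an absolute constant as $N\to\infty$.

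The key steps in order: (1) identify $D=\mathrm{diag}(2,1/2)$ and check the algebraic identity $D^{N}R_{\pi/2}D^{N}=\pm R_{\pi/2}$ (eigenvalue cancellation because $R_{\pi/2}$ interchanges the coordinate axes); (2) for each $N$, choose $x=x^{(N)}$ with $x_0=1$ and $x_j=0$ for $0<|j|\le N'$ with $N'$ large enough that $k(T^j x)>k_0$ fails, ensuring $\theta=0$ on the whole segment except possibly exactly at $q$, and track the finitely many boundary iterates where $\theta$ might be nonzero, absorbing their contribution into a constant $C$ independent of $N$; (3) conclude $\|A^{2N'+1}(T^{-N'}x)\|\le C$ for all $N$, so $\sup_n \inf_x \|A^n(x)\|^{1/n}$ cannot be bounded below by any $e^\tau>1$; (4) invoke Proposition~\ref{prop1} to deduce $(T,A)$ is not uniformly hyperbolic.

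The main obstacle I anticipate is step~(2): the definition of $\theta$ is delicate near $q$ (the formula $\pi/2-2^{-k(x)/8}$ on $V\setminus\{q\}$ with $k>k_0$ means that many points near $q$ have $\theta$ close to but not equal to $\pi/2$), so I must be careful about which iterates $T^j x$ actually have $\theta=0$ versus $\theta\approx\pi/2$ versus $\theta$ somewhere in between, and verify that the ``bad'' iterates are uniformly finite in number and uniformly bounded in norm. A clean way around this is to not use a single point near $q$ but rather to directly use the orbit passing through $q$ itself together with a careful count: the iterates $T^j x$ for $1\le |j|\le k_0$ have $k\le k_0$ hence $\theta=0$; the iterates with $|j|>k_0$ but $x$ still in $V$ are the subtle ones, but if I choose $x$ to have its nearest other $1$ very far away (or no other $1$ at all except in the distant past/future), then for the relevant middle range of $j$ the point $T^jx$ either equals $\vec 0$-like (no $1$ in a large window) giving $\theta=0$, or lies in $V$ with large $k$ giving $\theta\approx \pi/2$ — and I would then need the refined identity that $D$ conjugated by $R_{\pi/2-\delta}$ is still close to the axis-swap for small $\delta$, so the product telescopes up to a bounded error. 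Handling this error term cleanly is where the real work lies; the rotation-angle algebra in (1) is routine once set up.
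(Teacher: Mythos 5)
Your approach is correct in its clean form, and it is a genuinely different route from the paper's. The paper argues through the invariant bundles: assuming uniform hyperbolicity, invariance gives $E^s_{T^nq}=D^nR_{\pi/2}D^n\,E^s_{T^{-n}q}$ with $D=\mathrm{diag}(2,1/2)$, and continuity of $E^s$ near the fixed point $\vec 0$ forces $E^s_{T^{-n}q}$ to be close simultaneously to the vertical axis and to the horizontal axis, a contradiction. You instead contradict the norm criterion of Proposition \ref{prop1}: along the orbit of the homoclinic point $q$ every matrix except $A(q)=DR_{\pi/2}$ equals $D$, and the exact cancellation $D^{N}R_{\pi/2}D^{N}=R_{\pi/2}$ yields $\|A^{2N+1}(T^{-N}q)\|\le 2$ for all $N$, which is incompatible with a bound $\|A^n(x)\|\ge ce^{\tau n}$. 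What your route buys is that it avoids invoking the continuity of the hyperbolic splitting (a standard fact which the paper's proof uses without comment), relying only on Proposition \ref{prop1}, which the paper has already stated. Two caveats. First, the detour through finite approximations $x^{(N)}$ in your step (2) is unnecessary and, as written, shaky: for a point whose nearest other symbol $1$ sits at distance $N'$, the rotation at time $0$ has angle $\pi/2-\delta$ with $\delta=2^{-N'/8}$, and the middle entry of $D^{N}R_{\pi/2-\delta}D^{N}$ is of size $2^{2N}\sin\delta$, so the product is \emph{not} uniformly close to the axis swap for a fixed small $\delta$; you would need to couple $N'$ to $N$ (roughly $N'\ge 16N$), whereas using the orbit of $q$ itself there is no error term at all. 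Second, your computation (like the paper's own proof of this claim) uses the convention that $\theta$ vanishes outside $V$: the displayed case list does not literally cover points off $V$ with $k(x)>k_0$, but with that clearly intended reading one has $\theta(T^jq)=0$ for every $j\neq 0$ and your identity is exact.
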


\begin{proof}
Since $$\lim_{n\to \pm \infty}T^nq=\vec{0}:=(\dots,0,0,0,\dots)=T(\vec{0}) \quad \textrm{and} \quad  R_{\theta(q)}=R_{\pi/2},$$ by definition $q$ is a homoclinic point for the fixed point $\vec{0}$, therefore the cocycle cannot be uniformly hyperbolic. Let us suppose that $(T,A)$ is uniformly hyperbolic, by the invariance of $E^s_x$ $$E^s_{T^nq}=A^{2n}(T^{-n}q)E^s_{T^{-n}q}=\left(\begin{array}{clc}
2 & 0 \\
0 & 1/2
\end{array}\right)^nR_{\pi/2}\left(\begin{array}{clc}
2 & 0 \\
0 & 1/2
\end{array}\right)^n E^s_{T^{-n}q}$$ as $q$ is the only point of $V$ in the orbit of $q$. Besides, by the continuity of $E^s_x$, we have $E^s_{T^nq}\thickapprox E^s_{\vec{0}}=\{x=0\}$ for $n$ big enough. Hence, we would have that $E^s_{T^{-n}q}\thickapprox \{y=0\}$. However, by the continuity of $E^s_x$, we have $E^s_{T^{-n}q}\thickapprox E^s_{\vec{0}}=\{x=0\}$, a contradiction. 
\end{proof}

\begin{claim}
There is a set of full probability $\mathcal{S}$, such that $\lambda_+(x)\ge \log 2/2>0$ for every $x\in \mathcal{S}$.
\end{claim}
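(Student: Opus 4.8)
The plan is to pass to the ergodic invariant measures of $T$ and bound the Lyapunov exponent of each from below.

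Write $e_1=(1,0)$ and $D=\left(\begin{smallmatrix}2&0\\0&1/2\end{smallmatrix}\right)$, so $A(x)=D R_{\theta(x)}$, and set $V':=\{x\in V:\theta(x)\neq 0\}=\{x\in V:k(x)>k_0\}$ (with the convention $k(q)=\infty$), so that $A(x)=D$ whenever $x\notin V'$. Put $\mathcal{S}:=\{x\in\mathcal{R}:\lambda_+(x)\ge \tfrac12\log 2\}$. Since for an ergodic $T$-invariant $\mu$ one has $\lambda_+(x)=\lambda_+(\mu)$ for $\mu$-a.e.\ $x$ and $\mu(\mathcal{R})=1$, the ergodic decomposition shows that $\mathcal{S}$ is a full probability set as soon as we prove $\lambda_+(\mu)\ge\tfrac12\log 2$ for every ergodic $\mu$; I would fix $k_0$ large at the very end so that this comes out.

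The first ingredient is combinatorial: if $x\in V'$ then $x_0=1$ and $x_n=0$ for $0<|n|\le k_0$, so the sets $V',TV',\dots,T^{k_0}V'$ are pairwise disjoint, and being of equal $\mu$-measure they give $\mu(V')\le\frac1{k_0+1}$. Hence, along a $\mu$-typical orbit, the ``bad times'' $j$ with $T^jx\in V'$ have density $\le\frac1{k_0+1}$, all other steps act by $D$, and (when $\mu(V')>0$) the gaps between consecutive bad times form a stationary sequence of finite mean, so the block of consecutive non-bad steps containing time $n$ has length $o(n)$ for $\mu$-a.e.\ $x$. I would then estimate $\lambda_+(\mu)=\lim_n\tfrac1n\log\|A^n(x)\|\ge\liminf_n\tfrac1n\log\|A^n(x)e_1\|$ block by block: over a block of length $\ell$ the cocycle acts by $D^{\ell-1}DR_{\pi/2-\varepsilon}$ with $\varepsilon=2^{-k/8}$ and $k_0<k\le\ell$, and an explicit computation shows that, starting from a vector near the $e_1$-axis, the step $DR_{\pi/2-\varepsilon}$ produces a vector near the $e_2$-axis of norm $\approx\tfrac12$ of the original, the subsequent $D$'s contract it for about $k/16$ iterates and then re-expand it, and at the end of the block one is back near the $e_1$-axis with the norm multiplied by $2^{\ell-k/8}\ge 2^{7\ell/8}$. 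Chaining these over all blocks, using that the current block has length $o(n)$, gives $\liminf_n\tfrac1n\log\|A^n(x)e_1\|\ge\tfrac78\log 2$ for $\mu$-a.e.\ $x$ when $\mu(V')>0$ (and $\|A^n(x)\|=2^n$ when $\mu(V')=0$), so $\lambda_+(\mu)\ge\tfrac78\log 2>\tfrac12\log 2$.

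The hard part is this block estimate, and it is where the exact form of $\theta$ enters. A single bad step with $\theta\approx\pi/2$ is \emph{contracting} for $e_1$ and rotates it essentially onto $e_2$, so there is no uniform invariant cone around $e_1$ (precisely the reason the cocycle is not uniformly hyperbolic), and $\|A^n(x)e_1\|$ is badly non-monotone inside a block. The choice of the H\"older exponent $\tfrac18$, i.e.\ $\theta(x)=\pi/2-d(x,q)^{1/8}$ rather than, say, $\pi/2-d(x,q)$, is what keeps the rotation angle at distance $\ge 2^{-k/8}$ from $\pi/2$: it prevents the incoming near-$e_1$ vector from landing on the one contracting direction of $DR_{\pi/2-\varepsilon}$, bounding the in-block dip by $2^{-k/16}$ against the per-block gain $2^{7\ell/8}$ with $\ell\ge k$, after which the bound $\mu(V')\le\frac1{k_0+1}$ together with the ergodic theorem makes the cumulative effect of the dips $o(n)$ and hence invisible to the limit. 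An alternative that bypasses the dips is to invoke Kalinin's periodic approximation of Lyapunov exponents (legitimate since $A$ is $\tfrac18$-H\"older), reducing the claim to $\rho(A^P(p))\ge 2^{P/2}$ for every periodic $p$ of period $P$: writing $A^P(p)$ as a product of block matrices $D^{g_i}R_{\pi/2-\varepsilon_i}$, one checks $|\mathrm{tr}\,A^P(p)|\gtrsim 2^{\,P-\frac18\sum_i k_i}\ge 2^{7P/8}$ using $\sum_i k_i\le\sum_i g_i=P$.
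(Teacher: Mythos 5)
Your overall plan is viable, and it is packaged differently from the paper: the paper passes to the first--return cocycle $A_V(x)=A^{N_V(x)}(x)$ on $V_0$ and proves an invariant expanding cone field (Lemma \ref{lemma4}), which gives growth $2^{N_V(x)/2}$ per return with no need for the density bound $\mu(V')\le \tfrac{1}{k_0+1}$, while you track $A^n(x)e_1$ block by block along the orbit and use Birkhoff/Kac to make the in-block dips negligible. However, there is a genuine gap at the decisive step, which you assert rather than prove: the block computation giving the gain $2^{\ell-k/8}$ is valid only if the vector entering the block makes an angle with the $e_1$-axis that is small \emph{compared with} $\varepsilon=2^{-k/8}$ for the \emph{current} bad point; otherwise the incoming vector can sit essentially on the direction that $R_{\pi/2-\varepsilon}$ sends to the $e_2$-axis, the whole block contracts, and the estimate fails. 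The end-of-block direction is within roughly $2^{-2\ell'+k'/8}\le 2^{-15\ell'/8}$ of $e_1$, where $\ell'$ is the length of the block just completed, so what you need is that the current $k$ is controlled by the \emph{previous} gap $\ell'$, not merely by the current one; your write-up records only $k\le\ell$. The needed bound does hold --- the previous bad time contributes a $1$ at distance $\ell'$ to the left of the current bad coordinate, so $k\le\ell'$ and hence $\varepsilon\ge 2^{-\ell'/8}\gg 2^{-15\ell'/8}$ once $k_0$ is large, with the first block handled because the incoming vector there is exactly $e_1$ --- but it must be stated and propagated inductively. This two-sided control of $k(x)$ by the neighbouring return times is exactly what the paper's cone-invariance chain encodes ($\gamma(x)\le 2^{-3k(T_Vx)/2-1/16}\le \tfrac{\pi}{2}-\theta(T_Vx)-\beta(T_Vx)$, using $k(T_Vx)\le N_V(x)$), so without an analogous inequality your phrase ``prevents the incoming near-$e_1$ vector from landing on the contracting direction'' is unsupported.

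The alternative route via Kalinin's periodic approximation is legitimate in principle (the paper's remark notes it), but the trace estimate you propose is not a routine check: each block matrix $D^{g_i}R_{\pi/2-\varepsilon_i}$ has off-diagonal entries of absolute size about $2^{g_i}$, much larger than its $(1,1)$ entry $2^{g_i}\sin\varepsilon_i$, and the entries have mixed signs, so dominating $|\mathrm{tr}\,A^P(p)|$ by the product of the $(1,1)$ entries requires an argument; the clean way to get $\rho(A^P(p))\ge 2^{P/2}$ is again an invariant expanding cone (forcing an eigenvector in the cone), i.e.\ the same lemma you skipped, now using $k_i\le\min(g_{i-1},g_i)$ cyclically. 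Two minor points: the per-block gain is more honestly $2^{\ell}\sin(\varepsilon\pm\delta)\ge 2^{\ell-k/8-1}$, which is harmless since $\ell\ge k_0$; and once the cone-type estimate is in place, the density bound $\mu(V')\le\tfrac{1}{k_0+1}$ is not really needed for the exponent --- the per-block rate already exceeds $\tfrac12\log 2$ --- it only absorbs constants.
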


In the following, we are going deal with a linear cocycle induced by $(T,A)$ and the neighbourhood $V$, which allows us to prove the gap between the Lyapunov exponents. Let $V_0:=\bigcap_{k=1}^{\infty} \bigcup_{n=k}^{\infty} T^{-n}(V)\bigcap V$ the set of points in $V$ which return infinitely many times to $V$. Let $T_V:V_0\to V_0$ be the first return map defined by $$T_V(x):=T^{N_V(x)}(x), \quad \textrm{where $N_V(x):=\inf \{n \ge 1: T^n(x)\in V_0 \}$.}$$ Let $A_V:V\to SL(2,\R)$ be the function defined by $A_V(x):=A^{N_V(x)}(x)$. We proceed to prove the key lemma in order to prove the gap between the Lyapunov exponents.


\begin{remark} During the proof of the following lemma we are going to use that $k(x)\le N_V(x)$ and $k(T_V(x))\le N_V(x)$ for every $x\in X$.
\end{remark}

We say a set $\mathcal{C}\subset \R^2$ is a \emph{cone} if it is a homogeneous space between two transverse one-dimensional spaces. In the following lemma we prove the existence of a family of invariant cones $\mathcal{C}(x)$ for each $x\in V_0$.

\begin{lemma} \label{lemma4}
For every $x\in V_0$, there is a cone $\mathcal{C}(x)\subset \R^2$ such that $$A_V(x)\mathcal{C}(x)\subset \mathcal{C}(T_Vx).$$ Moreover, for every unit vector $v\in \mathcal{C}(x)$, we have $\|A_V(x)v\|\ge 2^{N_V(x)/2}$. 
\end{lemma}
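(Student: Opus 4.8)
The plan is to build the cone field explicitly using the rotation structure of the cocycle. The key observation is that $A_V(x) = A^{N_V(x)}(x)$ is a product of matrices, most of which are the pure hyperbolic matrix $D := \mathrm{diag}(2,1/2)$ (since $\theta(T^j x) = 0$ whenever $T^j x \notin V$), interrupted at the return times by one factor $D R_{\theta(\cdot)}$ where the rotation angle is very close to $\pi/2$ — close enough, by the choice of $k_0$, that the rotation by roughly $\pi/2$ essentially swaps the expanding and contracting directions. Iterating $D$ expands the horizontal direction, and the near-$\pi/2$ rotation at the return moves a nearly-horizontal vector to a nearly-vertical one, which $D$ then expands again in the next block. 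So the natural candidate for $\mathcal{C}(x)$ is a cone around the expanding (horizontal) eigendirection of $D$, with an aperture small enough to be contracted into itself under one step of $T_V$, but large enough that the initial rotations do not kick vectors out of it.

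First I would write $N := N_V(x)$ and decompose the orbit segment $x, Tx, \dots, T^{N-1}x$: the only iterates lying in $V$ are $x$ itself and possibly the return point, so $A_V(x) = D^{a} \cdot (D R_{\theta(T^m x)}) \cdot D^{b} \cdot R_{\theta(x)}$-type products — more carefully, $A_V(x) = \big(\prod_{j=0}^{N-1} A(T^j x)\big)$ where each $A(T^j x) = D R_{\theta(T^j x)}$ and $\theta(T^j x) = 0$ except for $j=0$ and the at-most-one intermediate index where $T^j x \in V$. The relevant quantitative inputs are: (i) $k(x) \le N$ and $k(T_V x) \le N$, so the angles $\theta$ appearing satisfy $\pi/2 - \theta \le 2^{-k/8} \le 2^{-N/8}$ (when $k > k_0$; the case $k \le k_0$ gives $\theta = 0$, which is even better since then there is no rotation to control), and (ii) $D$ expands the horizontal line and contracts angles toward it at rate $4$ per application. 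I would fix a cone $\mathcal{C} = \{(v_1,v_2): |v_2| \le \rho |v_1|\}$ with a single small constant $\rho$ (the same cone for every $x \in V_0$, or at worst depending mildly on $x$), choose $k_0$ large enough that $2^{-k_0/8}$ is small compared to $\rho$, and then verify: a vector in $\mathcal{C}$, after the initial $R_{\theta(x)}$ (rotation by something within $2^{-N/8}$ of $\pi/2$) followed by several $D$'s, followed by the return rotation and more $D$'s, lands back inside $\mathcal{C}$. The point is that a rotation by \emph{exactly} $\pi/2$ sends $\mathcal{C}$ to its "complementary" cone around the vertical, and one application of $D$ maps that into $\{|v_2| \le (\rho/4)|v_1|\} \subset \mathcal{C}$ with room to spare; the error $2^{-N/8}$ in the angle is absorbed because $N \ge 1$ and $D$'s contraction of angles dominates.

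For the norm lower bound $\|A_V(x) v\| \ge 2^{N/2}$ on unit vectors $v \in \mathcal{C}(x)$, I would track the horizontal component. Writing $N = N_V(x)$ and noting there is at most one intermediate return-rotation, the product $A_V(x)$ consists of $N$ factors each of norm-contribution tracked as follows: each $D$ multiplies the (dominant, nearly-horizontal) component by $2$, while each rotation (the initial one and the single intermediate one) is an isometry and, being within $2^{-N/8}$ of a multiple of $\pi/2$, maps a vector that is nearly-horizontal or nearly-vertical to one that is again nearly-axis-aligned, losing at most a bounded factor. Thus $\|A_V(x) v\| \ge 2^{N} \cdot (\text{bounded loss from} \le 2 \text{ rotations and the cone aperture})$, and since the rotations are isometries the "loss" is really only the aperture factor $\cos(\arctan \rho) \cdot (\text{similar})$; choosing $\rho$ small and $k_0$ large makes this $\ge 2^{N/2}$ comfortably — in fact one gets close to $2^N$. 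The honest bookkeeping is just that among the $N$ applications of $D$, the vector is within the good cone for all but a bounded number of steps, during which $D$ might shrink a coordinate by $1/2$ instead of growing it by $2$, costing at most a factor $2^{-c}$ for a fixed $c$; since $N^{1/2} \le N$ one still beats $2^{N/2}$ once $N$ (equivalently $k_0$) is large, and small $N$ is handled by the $k \le k_0$ branch.

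\textbf{The main obstacle} I anticipate is the bookkeeping around the return rotation when $k(x)$ or $k(T_V x)$ is comparable to $N_V(x)$ rather than much smaller: in that regime the angle defect $2^{-k/8}$ is only as small as $2^{-N/8}$, and one must be sure the cone $\mathcal{C}(x)$ — whose aperture I would like to take uniform — is genuinely forward-invariant and not slowly leaking. The resolution is that $D$ contracts angular aperture geometrically (factor $4$ per step) while the per-return angular error is exponentially small in $N$, so even a \emph{single} application of $D$ after each rotation restores and improves the cone; the delicate point is simply to order the estimates correctly — first apply the rotation (isometry, bounded angular distortion since the image of a thin cone around an axis under a near-$\pi/2$ rotation is a thin cone around the other axis), then use one $D$ to re-enter $\mathcal{C}$ with margin, then let the remaining $D$'s only improve matters. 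Getting the constants $\rho$ and $k_0$ in the right logical order (choose $\rho$ first from the contraction geometry of $D$, then $k_0$ large depending on $\rho$) is the crux, and once that is set up both the invariance and the $2^{N/2}$ bound follow by the elementary estimates above.
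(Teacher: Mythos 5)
There is a genuine gap: the cone you propose cannot satisfy either conclusion of the lemma. Since $x\in V_0$, no intermediate iterate $T^jx$ with $0<j<N_V(x)$ lies in $V$ (such a visit would be a visit to $V_0$, contradicting the definition of the first return time), so the induced block is exactly $A_V(x)=D^{N_V(x)}R_{\theta(x)}$ with $D=\mathrm{diag}(2,1/2)$: the only rotation is the initial one, and there is no ``intermediate return rotation'' to account for. Now take the unit vector $v_x=R_{\theta(x)}^{-1}e_2$, which makes the angle $\pi/2-\theta(x)=2^{-k(x)/8}$ with the horizontal axis. With $k_0$ chosen large, as you propose (so that $2^{-k_0/8}$ is small compared with your fixed aperture $\rho$), this $v_x$ lies \emph{inside} your cone $\{|v_2|\le\rho|v_1|\}$; but $R_{\theta(x)}v_x=e_2$ is exactly vertical, hence $\|A_V(x)v_x\|=2^{-N_V(x)}$, violating the required bound $\|A_V(x)v\|\ge 2^{N_V(x)/2}$ by an exponential factor, and the image is vertical, so forward invariance fails as well. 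No cone of fixed aperture around the expanding axis can work: the cone must depend on $x$ and must \emph{exclude} a thin sliver around the bad direction $R_{\theta(x)}^{-1}e_2$, which sits at the tiny angle $2^{-k(x)/8}$ from the horizontal, strictly inside any uniform cone. This is precisely what the paper does: $\mathcal{C}(x)$ is the \emph{complement} of the cone of half-aperture $\beta(x)=2^{-k(x)/2+1/4}$ around the direction at angle $\pi/2-\theta(x)$, and everything hinges on $\beta(x)\ll 2^{-k(x)/8}$ together with the inequalities $k(x)\le N_V(x)$ and $k(T_Vx)\le N_V(x)$.

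A second, related error is the one-step mechanism you invoke. The matrix $D$ divides slopes by $4$, so it maps a thin cone around the vertical to another cone that is still around the vertical (merely somewhat wider as seen from the vertical), not into $\{|v_2|\le(\rho/4)|v_1|\}$; bringing a vector at angle $\psi$ from the vertical into an aperture-$\rho$ cone around the horizontal takes on the order of $\log_4\bigl(1/(\rho\psi)\bigr)$ applications of $D$, and for the worst vector above ($\psi=0$) no number of applications helps. This is also why the norm bookkeeping cannot be ``bounded loss'': after the initial rotation, a vector of the correct cone $\mathcal{C}(x)$ may have horizontal component as small as $\sin\beta(x)\approx 2^{-k(x)/2}$, and the expansion $2^{N_V(x)}$ of that component beats this loss only because $k(x)\le N_V(x)$, which is exactly where the exponent $N_V(x)/2$ (rather than anything close to $N_V(x)$) comes from; similarly, the image cone has half-aperture about $2^{-2N_V(x)}\cot\beta(x)\le 2^{-3N_V(x)/2}$, which must be (and, in the paper, is) compared with $2^{-k(T_Vx)/8}-\beta(T_Vx)$, the angular distance from the horizontal to the excluded sliver of $\mathcal{C}(T_Vx)$. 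Your high-level picture (long hyperbolic blocks separated by near-$\pi/2$ rotations) is the right one, but the cone you chose and the two key estimates are not, so the proposal does not prove the lemma.
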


\begin{proof}
By definition $$A_V(x)= \left(\begin{array}{clc}
2 & 0 \\
0 & 1/2
\end{array}\right)^{N_V(x)}R_{\theta(x)}. $$ Let us define $\beta(x)=2^{-k(x)/2+1/4}$. Note that $$0<\dfrac{\pi}{2}-\theta(x)-\beta(x)<\dfrac{\pi}{2}-\theta(x)+\beta(x)<\dfrac{\pi}{2}.$$ By our definition of $\beta(x)$, $\pi/2>\theta(x) +\beta(x)$ follows directly. Besides, $\theta(x)>\beta(x)$ is equivalent to $$\dfrac{\pi}{2}>2^{-k(x)/8}+2^{-k(x)/2+1/4}.$$ If $k(x)$ is big enough, $2^{-k(x)/8}<0.3$. Hence $$2^{-k(x)/8}+2^{-k(x)/2+1/4}< 0.3+2^{1/4}<\dfrac{\pi}{2}.$$ Due to last condition, it makes sense define the cone $$\mathcal{C}(x)=\R^2 \setminus \Big\{(x,y)\in \R^2: \dfrac{\pi}{2}-\theta(x)-\beta(x)\le \arctan \Big(\dfrac{y}{x}\Big)\le \dfrac{\pi}{2}-\theta(x)+\beta(x) \Big\}$$ showed on Figure 2. We proceed to prove that the cone $\mathcal{C}(x)$ satisfies the lemma. Let $v$ be a unit vector in $\mathcal{C}(x)$. If $k(x)$ is big enough, then $\sin \beta(x)>2^{-1/8}\beta(x)$. Hence $$\|A_V(x)v\|\ge 2^{N_V(x)}\sin \beta(x)\ge 2^{N_V(x)-k(x)/2+1/8} \ge 2^{N_V(x)/2+1/8}\ge 2^{N_V(x)/2}.$$

\begin{center}
\begin{tikzpicture}  
 
 \draw[draw=gray!50!white,fill=gray!50!white] 
    plot[smooth,samples=100,domain=-6:-4] (\x,{-sqrt(1-(\x+5)^2)}) -- 
    plot[smooth,samples=100,domain=-4:-6] (\x,{ sqrt(1-(\x+5)^2) });

 \draw[draw=white!50!white,fill=white!50!white] 
    plot[smooth,samples=100,domain=-7:-3] (\x,{(\x+5)/7}) -- 
    plot[smooth,samples=100,domain=-3:-7] (\x,{(\x+5)/2});
    
 \draw[draw=white!50!white,fill=white!50!white] 
    plot[smooth,samples=100,domain=-7:-3] (\x,{(\x+5)/2}) -- 
    plot[smooth,samples=100,domain=-3:-7] (\x,{(\x+5)});
    
\node [below] at (-3.8,-1) {$\mathcal{C}(x)$};    
\node [below] at (-3.5,0.72) {\footnotesize$\beta(x)$};    
    
\node [below] at (-0,-2.6) {$\textmd{Figure 3: Lemma \ref{lemma4}}$};

 \draw[draw=gray!50!white,fill=gray!50!white] 
    plot[smooth,samples=100,domain=-1:1] (\x,{-sqrt(1-(\x)^2)}) -- 
    plot[smooth,samples=100,domain=1:-1] (\x,{sqrt(1-(\x)^2)});
    
 \draw[draw=white!50!white,fill=white!50!white] 
    plot[smooth,samples=100,domain=0:0.4] (\x,{3*(\x)}) -- 
    plot[smooth,samples=100,domain=0.4:0] (\x,{1.2});

 \draw[draw=white!50!white,fill=white!50!white] 
    plot[smooth,samples=100,domain=-0.4:0] (\x,{-3*(\x)}) -- 
    plot[smooth,samples=100,domain=0:-0.4] (\x,{1.2});
    
 \draw[draw=white!50!white,fill=white!50!white] 
    plot[smooth,samples=100,domain=0:0.4] (\x,{-1.2}) -- 
    plot[smooth,samples=100,domain=0.4:0] (\x,{-3*(\x)});

 \draw[draw=white!50!white,fill=white!50!white] 
    plot[smooth,samples=100,domain=-0.4:0] (\x,{-1.2}) -- 
    plot[smooth,samples=100,domain=0:-0.4] (\x,{3*(\x)});
   
 \node [below] at (1.3,-0.7) {$R_{\theta}\mathcal{C}(x)$};

    \draw [<->] (-7.3, 0) -- (-2.3,0);
    \draw [<->] (-5, -2.2) -- (-5,2.4);

    \draw [domain=-0.7:0.75,samples=100] plot(\x, { 3*(\x) } );
    \draw [domain=-0.75:0.7,samples=100] plot(\x, { -3*(\x) } );
	\node [below] at (0.35,2.2) {\footnotesize$\beta(x)$};    
    \draw [domain=0:0.7,samples=100] plot(\x, { sqrt(4.9-(\x)^2) } );

    \draw [<->] (-2.1, 0) -- (2.3,0);
    \draw [<->] (0, -2.2) -- (0,2.4);

 \draw[draw=gray!50!white,fill=gray!50!white] 
    plot[smooth,samples=100,domain=5:7] (\x,{-(\x-5)/11}) -- 
    plot[smooth,samples=100,domain=7:5] (\x,{(\x-5)/11});

 \draw[draw=gray!50!white,fill=gray!50!white] 
    plot[smooth,samples=100,domain=5:3] (\x,{(\x-5)/11}) -- 
    plot[smooth,samples=100,domain=3:5] (\x,{-(\x-5)/11});

    \draw [<->] (2.7, 0) -- (7.6,0);
    \draw [<->] (5, -2.2) -- (5,2.4);
    
  \node [below] at (7,-0.5) {$A_V(x)\mathcal{C}(x)$};

    \draw [domain=-7.2:-2.8,samples=100] plot(\x, { (\x+5)/2 } );
    \draw [domain=-7.1:-2.9,samples=100] plot(\x, { (\x+5)/7 } );
    \draw [domain=-6.8:-3.4,samples=100] plot(\x, { (\x+5) } );

    \draw [domain=2.9:7.3,samples=100] plot(\x, { (\x-5)/11 } );
    \draw [domain=2.9:7.3,samples=100] plot(\x, { -(\x-5)/11 } );
    \draw [domain=-3.24:-3.045,samples=100] plot(\x, { sqrt(3.9-(\x+5)^2) } );

 \draw [domain=-3:-2.76393202250021,samples=100] plot(\x, { sqrt(5-(\x+5)^2) } );

        \node [below] at (-2.1, 0.9) {\footnotesize$\pi/2-\theta(x)$};

    \draw [->] (2, 1.4) -- (3,1.4);
    \draw [->] (-3.1, 1.4) -- (-2,1.4);

 \node [above] at (-2.5,1.4) {$R_{\theta}$};    

 \node [above] at (3,1.4) {\footnotesize$\left(\begin{array}{clc}
2 & 0 \\
0 & 1/2
\end{array}\right)^{N_V(x)}$};    

 \node [above] at (7.4,-0.09) {\footnotesize$\gamma$};    

    \draw [domain=7.227:7.23606797749979,samples=100] plot(\x, { sqrt(5-(\x-5)^2) } );

\end{tikzpicture}   
\end{center}

Let $\gamma$ be the greatest angle between a vector in $A_V(x)\mathcal{C}(x)$ and the $x$ axis, which is in the first quadrant. Consequentially, it is enough to prove $$\gamma(x)<\dfrac{\pi}{2}-\theta(T_Vx)-\beta(T_Vx) \quad \text{for all $x \in V_0$}$$ to get the invariant condition. We can assume the vectors $$(\cos \gamma(x), \sin \gamma(x))\quad \text{and} \quad  (2^{N_V(x)}\sin \beta(x), 2^{-N_V(x)}\cos \beta(x)),$$ are linearly dependent. If $k(x)$ is big enough, $\cot \beta(x)\le 2^{1/8}\beta(x)^{-1}=2^{k(x)/2-1/8}$. Hence $$\tan \gamma(x)=2^{-2N_V(x)}\cot \beta(x) \le 2^{-2N_V(x)+k(x)/2-1/8}\le 2^{-3N_V(x)/2-1/8},$$ which tends to zero when $k(x)$ tends to infinite. As $\gamma(x)\in (0, \pi/2)$, we conclude that $\gamma(x)$ tends to zero. By the previous calculation, $$\gamma(x) \le 2^{1/16}\tan \gamma(x)\le 2^{-3N_V(x)/2-1/16}\le 2^{-3k(T_Vx)/2-1/16},$$ if $k(x)$ is big enough. Finally we show that $$\dfrac{\pi}{2}-\theta(T_Vx)-\beta(T_Vx)=2^{-k(T_Vx)/8}-2^{-k(T_Vx)/2+1/4}$$ $$=2^{-k(T_Vx)/2}(2^{3k(T_Vx)/8}-2^{1/4}) \ge 2^{-k(T_Vx)/2} \ge 2^{-3k(T_Vx)/2-1/16}$$

Last two inequality series prove that $A_V(x)\mathcal{C}(x)\subset \mathcal{C}(T_Vx)$ when $k(x)>k_0$ for some big enough positive integer $k_0$.



\end{proof}

Finally, let $\mu$ be an ergodic $T$-invariant measure. If $\mu(V)=0$, then $\theta(x)=0$ and consequently $\lambda_+(x)=\log 2$ $\mu$-almost everywhere. Otherwise, by Poincar\'e recurrence theorem $\mu(V)=\mu(V_0)$ for every $T$-invariant probability measure $\mu$. Let us define $\mathcal{C}_{\infty}(x):=\bigcap_{n=0}^{\infty} A^n_V(T^{-n}_V x)\mathcal{C}(T^{-n}_Vx).$ Note that $\mathcal{C}_{\infty}(x)$ has nonzero vectors since $A^{n+1}_V(T^{-(n+1)}_V x)\mathcal{C}(T^{-(n+1)}_Vx)\subset A^n_V(T^{-n}_V x)\mathcal{C}(T^{-n}_Vx)$ for every $n\in \N$. By Lemma \ref{lemma4} $$\|A_V^n(x)v\|\ge 2^{N_V(T^{n-1}_Vx)}\|A_V^{n-1}(x)v\|\ge 2^{N_V(T^{n-1}_Vx)+N_V(T^{n-2}_Vx)+\dots+N_V(x))/2}\|v\|,$$ for every $x\in V_0$ and $v\in \mathcal{C}_{\infty}(x)$. Hence, there is a sequence $j_1<j_2<\dots<j_n=N_V(T^{n-1}_Vx)+N_V(T^{n-2}_Vx)+\dots+N_V(x)$ such that $\|A^{j_n}(x)\cdot v \|\ge 2^{j_n/2}\|v\|$ for every $v\in \mathcal{C}_{\infty}$ and $n\in \N$. Consequently, by Oseledets's theorem $$\lambda_{+}(x)= \lim_{n\to \infty} \dfrac{\log \|A_V^n(x)v\|}{n}= \lim_{n\to \infty} \dfrac{\log \|A_V^{j_n}(x)v\|}{j_n}\ge \dfrac{\log 2}{2}$$ for almost every $x\in V_0$ and $v\in \mathcal{C}_{\infty}(x)$. It proves the gap between the Lyapunov exponents of the $SL(2,\R)$-cocycle $(T,A)$ for an arbitrary ergodic measure $\mu$. Furthermore, by the ergodic decomposition theorem we get that $\lambda_+(x) \ge \log 2/2$ for every $x\in V_0$ and every $T$-invariant measure $\mu$. Note that when $k(x)\le k_0$ the cocycle does not have rotations, and the Lyapunov exponent $\lambda_+$ is equal to $\log 2$. As a result $$\lambda_+(x)\ge \dfrac{\log 2}{2} \quad \text{for all $x \in \mathcal{S}$},$$ where $\mathcal{S}:=[\bigcup_{n\in \Z}T^n(X\setminus V)\bigcup V_0]\bigcap \mathcal{R}$ is a set of full probability. 

\begin{remark}
The cocycle is $1/8$-H\"{o}lder continuous as $2^{-k(x)}=d(x,q)$ and $\theta(x)=\pi/2 -2^{-k(x)/8}$ if $x \in V\setminus \{q \}$ and $k(x)> k_0$. We notice directly that the cocycle does not satisfy the fiber-bunching condition. Generally, a $SL(2,\R)$-cocycle which is $\alpha$-H\"{o}lder satisfies the fiber-bunching condition if and only if $\|A(x)\|^2\cdot 2^{-\alpha}<1$. However, last example satisfies $\|A(x)\|=2$ and $\alpha=1/8$, so the cocycle does not satisfy the fiber-bunching condition.
\end{remark}

Using a similar strategy, one should be able to construct similar examples where $T$ is an Anosov diffeomorphism.

\section{Final Remarks}

In this section we are going to state a natural question which is motivated by Theorem \ref{theo1}. In order to do it, we are going to define a well known concept called dominated splitting, for more details see \cite{Sam}. In the following, $(T,A)$ will denote a $GL(d,\R)$-cocycle. In addition, $\sigma_1(M)\ge \dots \ge \sigma_{d}(M)$ will be the singular values of a matrix $M$ and $\mathfrak{m}(M)=\inf_{\|v\|=1} \|Mv\|$ will be the co-norm of a matrix $M$. Note that $\sigma_1(M)=\|M\|$ and $\mathfrak{m}(M)=\|M^{-1}\|^{-1}=\sigma_d(M)$ for every $M\in GL(d,\R)$. We proceed with the definition of a dominated splitting.

\begin{definition}
We say that $A$ admits a \emph{dominated splitting of index $i$} if there is a $A$-invariant splitting $V=E\oplus F$ where $dim E=i$ and there are constants $C>0$ and $0<\tau<1$ such that $$\dfrac{\|A^n(x)|F_x\|}{m(A^n(x)|E_x)}<C\tau^n \quad \textrm{for every $x\in X$ and every $n\ge 0$.}$$
\end{definition}

\begin{remark} Note that for $SL(2,\R)$-cocycles, last definition is equivalent to uniform hyperbolicity.
\end{remark}

The following theorem proved by J. Bochi and N. Gourmelon in \cite{BG} generalizes Proposition \ref{prop1} to higher dimensions.

\begin{theorem} \label{theo3} The following assertions about a linear cocycle are equivalent 
\begin{itemize}
\item[a)] There is a dominated splitting of index $i$.
\item[b)] There exist $C>0$ and $\tau<1$ such that $\dfrac{\sigma_{i+1}(A^n(x))}{\sigma_{i}(A^n(x))}<C\tau^n$ for all $x\in X$ and $n\ge 0.$
\end{itemize}
\end{theorem}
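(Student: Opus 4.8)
The plan is to prove the two implications separately; (a) $\Rightarrow$ (b) is a short computation with singular values, whereas (b) $\Rightarrow$ (a) is the substantial half, where the splitting must be manufactured out of the singular gap. For (a) $\Rightarrow$ (b): if $V = E \oplus F$ is an $A$-invariant dominated splitting of index $i$, so that $\|A^n(x)|F_x\| / \mathfrak{m}(A^n(x)|E_x) < C\tau^n$, then the Courant--Fischer descriptions $\sigma_i(M) = \max\{\mathfrak{m}(M|S) : \dim S = i\}$ and $\sigma_{i+1}(M) = \min\{\|M|S\| : \dim S = d-i\}$, evaluated at $M = A^n(x)$ with $S = E_x$ and $S = F_x$ respectively, yield $\sigma_i(A^n(x)) \ge \mathfrak{m}(A^n(x)|E_x)$ and $\sigma_{i+1}(A^n(x)) \le \|A^n(x)|F_x\|$; dividing gives (b).

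For (b) $\Rightarrow$ (a) the plan is to build the two subbundles as limits of singular subspaces. Fix $x$ and, for $n$ large, let $E^n_x \subset \R^d$ be the span of the top $i$ right singular vectors of $A^n(x)$, the $i$-dimensional subspace realising $\mathfrak{m}(A^n(x)|E^n_x) = \sigma_i(A^n(x))$. The crucial ingredient is a perturbation lemma: if $M$ has $\sigma_{i+1}(M) < \epsilon\,\sigma_i(M)$ and $L$ is invertible with $\|L\|\,\|L^{-1}\| \le K$, then the top-$i$ right singular subspace of $LM$ lies within distance $O(K\epsilon)$, in the Grassmannian $\mathrm{Gr}(i,d)$, of that of $M$. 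This holds because $LM$ still has co-norm at least $\sigma_i(M)/\|L^{-1}\|$ on $E^n_x$ and norm at most $\|L\|\,\sigma_{i+1}(M)$ on $(E^n_x)^\perp$, while $LM$ maps its own top and bottom right-singular subspaces to orthogonal subspaces, so that every unit vector of $(E^n_x)^\perp$ has component of norm at most $K\epsilon$ along the top singular subspace of $LM$. Applying this with $L = A(T^n x)$ (uniformly bounded, $X$ being compact and $A$ continuous) and $\epsilon = C\tau^n$ shows that $(E^n_x)_n$ is Cauchy with $\mathrm{dist}(E^n_x, E^{n+1}_x) \le C'\tau^n$; set $E_x := \lim_n E^n_x$. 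Running the identical construction for the inverse cocycle $\bigl(T^{-1},\, x \mapsto A(T^{-1}x)^{-1}\bigr)$, whose $n$-th iterate is $A^{-n}(x)$ and whose index-$(d-i)$ singular gap at $x$ equals the index-$i$ gap of $A^n$ at $T^{-n}x$ (hence is again $< C\tau^n$ by (b)), produces a $(d-i)$-dimensional subbundle $F_x := \lim_n\bigl[\text{top-}(d-i)\text{ right singular subspace of } A^{-n}(x)\bigr]$.

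It then remains to verify that $E \oplus F$ is the required dominated splitting. Invariance $A(x)E_x = E_{Tx}$ follows by passing to the limit in $A^{n+1}(x) = A^n(Tx)A(x)$: the $i$-dimensional subspace $A(x)E^{n+1}_x$ is one on which $A^n(Tx)$ is nearly maximally expanding, so the perturbation lemma places it within $O(C\tau^n)$ of $E^n_{Tx}$; likewise $A(x)F_x = F_{Tx}$. The rate $\mathrm{dist}(E^n_x, E_x) \le C'\tau^n$, together again with orthogonality of singular images, gives $\mathfrak{m}(A^n(x)|E_x) \ge c\,\sigma_i(A^n(x))$ and, symmetrically (using the inverse cocycle), $\|A^n(x)|F_x\| \le c^{-1}\sigma_{i+1}(A^n(x))$. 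Hence $E_x \cap F_x = \{0\}$: a nonzero common vector would be stretched by $A^n(x)$ by an amount both at least $c\,\sigma_i(A^n(x))$ and at most $c^{-1}\sigma_{i+1}(A^n(x))$, forcing $\sigma_i(A^n(x))$ and $\sigma_{i+1}(A^n(x))$ to be comparable, contradicting (b) for large $n$; so $\R^d = E_x \oplus F_x$ by dimension. Since $x \mapsto E_x$ and $x \mapsto F_x$ are uniform limits of continuous maps they are continuous, and a transverse continuous splitting over the compact space $X$ has angle bounded away from zero. Finally $\|A^n(x)|F_x\| / \mathfrak{m}(A^n(x)|E_x) \le c^{-2}\,\sigma_{i+1}(A^n(x))/\sigma_i(A^n(x)) < c^{-2}C\tau^n$, which is domination of index $i$.

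The step I expect to be the main obstacle is the perturbation lemma, together with the bookkeeping that makes every constant above ($C'$, $c$, and the final angle bound) depend only on $\sup_x\|A(x)\|$ and $\sup_x\|A(x)^{-1}\|$, not on $x$ or $n$ — in particular checking that the exponential closeness of consecutive singular subspaces survives the passage between $x$ and $Tx$ and the inversion of the cocycle. Once that uniform quantitative control is in hand, invariance, transversality and the domination estimate are routine limiting arguments. A complete proof is given in \cite{BG}.
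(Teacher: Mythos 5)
Your direction (a) $\Rightarrow$ (b) is fine, and note for context that the paper itself offers no proof of Theorem \ref{theo3} at all --- it simply quotes the result from \cite{BG} --- so the only thing to assess is the internal correctness of your sketch.

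There is a genuine error in your (b) $\Rightarrow$ (a) direction: you build the bundles out of the \emph{wrong} singular subspaces. The limit $E_x=\lim_n E^n_x$ of the top-$i$ \emph{right} singular subspaces of $A^n(x)$ does exist (your perturbation lemma and the Cauchy estimate are correct), but this limit is the orthogonal complement of the dominated bundle, not the dominating bundle, and it is not invariant in general. Your invariance step is where this surfaces: you argue that $A(x)E^{n+1}_x$ is a subspace on which $A^n(Tx)$ is ``nearly maximally expanding'' and conclude it must be $O(\tau^n)$-close to $E^n_{Tx}$. But ``nearly maximally'' here only means up to the bounded factor $K=\sup_x\|A(x)\|\,\|A(x)^{-1}\|$, and co-norm $\ge \sigma_i/K$ does \emph{not} localize a subspace near the top singular subspace: a unit vector with a large component in the bottom subspace is still expanded by roughly $\sigma_i$ times its top component, since the bottom component contributes almost nothing to the image. (Your perturbation lemma does not apply either: there the comparison is between $LM$ and $M$, a left perturbation, whereas $A^{n+1}(x)=A^n(Tx)A(x)$ perturbs on the right, and right singular subspaces behave differently under right multiplication.) A concrete counterexample is the constant cocycle $A\equiv\begin{pmatrix}2&1\\0&1/2\end{pmatrix}$: the top right singular direction of $A^n$ converges to the orthocomplement of the stable eigendirection, which is not $A$-invariant; and your $F_x$, which here is the orthocomplement $\mathrm{span}(0,1)$ of the unstable direction, satisfies $\|A^n(0,1)^{T}\|\asymp 2^{n}$, not $\asymp\sigma_2(A^n)\asymp 2^{-n}$, so the domination estimate you assert for $F$ also fails.

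The asymmetry that saves the argument is: a subspace on which the expansion is at most a bounded multiple of $\sigma_{i+1}$ \emph{is} forced to be $O(\tau^n)$-close to the bottom singular subspace (any top component $a$ gives expansion $\ge |a|\sigma_i$, hence $|a|\lesssim\tau^n$). So the correct construction --- and essentially the one in \cite{BG} --- takes $F_x:=\lim_n$ (most contracted $(d-i)$-dimensional right singular subspace of $A^n(x)$) and $E_x:=\lim_n$ (most contracted $i$-dimensional right singular subspace of $A^{-n}(x)$, equivalently the top left singular subspace of $A^n(T^{-n}x)$); for these choices your own estimates do yield convergence, invariance, transversality and the domination bound. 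As written, however, your $E$ and $F$ are the orthocomplements of the right bundles, and the invariance and domination steps break down.
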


In fact, a way to define the intermediate Lyapunov exponents is through singular values. More precisely, we have $$\lim_{n\to \infty} \dfrac{1}{n} \log (\sigma_i(A^n(x)))=\lambda_i(x)$$ for every $x\in \mathcal{S}$, where $\mathcal{S}$ is a full probability set. In particular, $\lambda_+=\lambda_1$ and $\lambda_-=\lambda_d$, since $\sigma_1(M)=\|M\|$ and $\|M^{-1}\|^{-1}=\sigma_d(M)$ for every $M\in GL(d,\R)$. Hence, the existence of a dominated splitting of index $i$ implies the uniform gap between $\lambda_i(x)$ and $\lambda_{i+1}(x)$. More precisely, $$\lambda_i(x)-\lambda_{i+1}(x)\ge \log (\tau^{-1}) \quad \textrm{for all $x\in \mathcal{S}$}.$$ Firstly, we state a direct consequence of the proof of Theorem \ref{theo1} and Theorem \ref{theo3}.

\begin{corollary} 
Let $(T,A)$ be a $GL(2,\R)$-cocycle defined over a transitive Anosov diffeomorphism, which satisfies the fiber-bunching condition. Then, if there is a constant $\tau>0$ and a full probability set $\mathcal{S}\subset \mathcal{R}$ such that $$\lambda_+(x)- \lambda_-(x) \ge \tau \quad \textrm{for all $x \in \mathcal{S}$},$$ the cocycle $(T,A)$ admits a dominated splitting. \end{corollary}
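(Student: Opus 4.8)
The plan is to reduce the $GL(2,\mathbb{R})$ case to the $SL(2,\mathbb{R})$ case already handled in Theorem \ref{theo1}, and then invoke Theorem \ref{theo3} to upgrade uniform hyperbolicity of the normalized cocycle into a dominated splitting of the original one. First I would write $A(x) = \delta(x)^{1/2} B(x)$ where $\delta(x) := |\det A(x)| > 0$, so that $B(x) \in SL^{\pm}(2,\mathbb{R})$; after possibly composing with a fixed reflection (which does not affect norms, singular values, or Hölder regularity) we may assume $B : X \to SL(2,\mathbb{R})$. Since $X$ is compact and $A$ is $\alpha$-Hölder and invertible, $\delta$ is $\alpha$-Hölder and bounded away from $0$ and $\infty$, hence $\delta^{1/2}$ is $\alpha$-Hölder; thus $(T,B)$ is an $\alpha$-Hölder $SL(2,\mathbb{R})$-cocycle. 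One checks that the fiber-bunching inequality for $(T,A)$, namely $\|A(x)\|\cdot\|A(x)^{-1}\|\cdot\lambda^\alpha < 1$, is literally the same quantity as $\|B(x)\|\cdot\|B(x)^{-1}\|\cdot\lambda^\alpha$ because $\|A(x)\| = \delta(x)^{1/2}\|B(x)\|$ and $\|A(x)^{-1}\| = \delta(x)^{-1/2}\|B(x)^{-1}\|$, so $(T,B)$ is fiber-bunched as well.

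Next I would relate the Lyapunov exponents. Writing $\Delta_n(x) := \prod_{j=0}^{n-1}\delta(T^j x)$, we have $A^n(x) = \Delta_n(x)^{1/2} B^n(x)$, hence $\sigma_i(A^n(x)) = \Delta_n(x)^{1/2}\sigma_i(B^n(x))$ for $i=1,2$, and so $\lambda_i^{A}(x) = \tfrac12\lim_n \tfrac1n\log\Delta_n(x) + \lambda_i^{B}(x)$ at every regular point. Therefore the \emph{difference} of exponents is unchanged: $\lambda_+^{A}(x) - \lambda_-^{A}(x) = \lambda_+^{B}(x) - \lambda_-^{B}(x) = 2\lambda_+^{B}(x)$, the last equality because $B^n(x) \in SL(2,\mathbb{R})$ forces $\lambda_+^B = -\lambda_-^B$. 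Thus the hypothesis $\lambda_+^A(x) - \lambda_-^A(x) \ge \tau$ on the full-probability set $\mathcal{S}$ translates into $\lambda_+^B(x) \ge \tau/2 > 0$ on $\mathcal{S}$, which is exactly the hypothesis of Theorem \ref{theo1} for the fiber-bunched $SL(2,\mathbb{R})$-cocycle $(T,B)$ over the transitive Anosov diffeomorphism $T$. Hence $(T,B)$ is uniformly hyperbolic, and by Proposition \ref{prop1} there are $c>0$, $\tau'>0$ with $\|B^n(x)\| \ge c\,e^{\tau' n}$, equivalently (since $\det B^n(x) = 1$) $\sigma_1(B^n(x))/\sigma_2(B^n(x)) = \|B^n(x)\|^2 \ge c^2 e^{2\tau' n}$ for all $x, n$.

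Finally I would conclude with Theorem \ref{theo3}: the ratio $\sigma_2(A^n(x))/\sigma_1(A^n(x)) = \sigma_2(B^n(x))/\sigma_1(B^n(x)) \le c^{-2} e^{-2\tau' n}$ is the condition b) with index $i=1$, $C = c^{-2}$ and $\tau_0 = e^{-2\tau'} < 1$, so $A$ admits a dominated splitting of index $1$, proving the corollary. I expect the only genuinely delicate point to be the reduction to $SL(2,\mathbb{R})$ rather than merely $SL^{\pm}(2,\mathbb{R})$: if one cannot globally and continuously trivialize the sign of the determinant (there is no obstruction here since scaling by $\delta^{1/2}$ already lands in matrices of determinant $\pm 1$ and a single fixed reflection $\mathrm{diag}(1,-1)$ applied uniformly converts determinant $-1$ into $+1$ without affecting any of the relevant quantities), one simply works with $B$ taking values in $SL^{\pm}(2,\mathbb{R})$ throughout, noting that all the cited results — holonomies from Theorem \ref{theo0}, the closing-lemma arguments, Proposition \ref{prop1}, and Theorem \ref{oseledets} — use only the norm and determinant-one-up-to-sign structure, so the proof of Theorem \ref{theo1} goes through verbatim. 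Everything else is bookkeeping with singular values and the multiplicativity $\sigma_i(cM) = |c|\sigma_i(M)$.
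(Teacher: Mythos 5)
Your argument is correct, but it takes a genuinely different route from the paper's. The paper does not normalize the determinant: it re-runs the proof of Theorem \ref{theo1} for the original $GL(2,\R)$-cocycle, with the quantity $\sigma_1(A^n(x))/\sigma_2(A^n(x))=\|A^n(x)\|^2/|\det A^n(x)|$ playing the role of $\|A^n(x)\|$ and Theorem \ref{theo3} playing the role of Proposition \ref{prop1} as the criterion to negate; the only new ingredient it records is the elementary inequality $\sigma_1(L)/\sigma_2(L)=\sigma_1(L)^2/|\det L|\ge \lambda_1(L)^2/|\det L|=\lambda_1(L)/\lambda_2(L)$ (spectral radius at most norm), which converts the bound on the singular-value ratio of $A^{n}(p)$ at the periodic point $p$ produced by the holonomy/closing construction into a bound on the Lyapunov gap at $p$, contradicting the hypothesis. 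Your reduction $A=|\det A|^{1/2}B$ instead uses Theorem \ref{theo1} as a black box, which spares you from revisiting the holonomy argument (in particular from noting that the holonomy identity also bounds $\sigma_2(A^n(p))$ from below, i.e.\ controls inverses, a point the paper leaves implicit); the price is the bookkeeping you do carry out correctly (H\"older regularity and fiber-bunching of $B$, invariance of the exponent gap, $\lambda_+^B=-\lambda_-^B$), plus the determinant-sign issue discussed next. Both routes end with Theorem \ref{theo3}, and your singular-value computation $\sigma_1(A^n)/\sigma_2(A^n)=\|B^n\|^2\ge c^2e^{2\tau' n}$ is exactly what is needed.

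One remark in your write-up is wrong as stated: multiplying each $B(x)$ by a fixed reflection does not yield an $SL(2,\R)$-cocycle with the same products, because the reflections do not cancel inside $B^n(x)$ (this is not a conjugation), so you cannot ``assume $B$ takes values in $SL(2,\R)$'' that way. Your fallback is the correct fix and should be the main statement: since $X$ is connected, $\det A$ has constant sign, so $B$ takes values in matrices of determinant $\pm 1$, and the proof of Theorem \ref{theo1} (and of Proposition \ref{prop1}) uses only $\|M\|=\|M^{-1}\|$, the fiber-bunching inequality and the holonomies of Theorem \ref{theo0}, all of which hold in that setting. Alternatively, on the way back you can bypass Proposition \ref{prop1} entirely: uniform hyperbolicity of $(T,B)$ already gives $\|B^n(y)\|\ge c^{-1}\lambda^{-n}$ for all $y$ and $n$ directly from the definition, applied to unit vectors in $E^u$.
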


In fact, $$\dfrac{\sigma_1(L)}{\sigma_2(L)}=\dfrac{\sigma_1(L)}{|\det L|/\sigma_1(L)}=\dfrac{\sigma_1(L)^2}{|\det L|}\ge \dfrac{\lambda_1(L)^2}{|\det L|}=\dfrac{\lambda_1(L)}{\lambda_2(L)},$$ which justifies the previous corollary. Naturally, Theorem \ref{theo3} suggests the following question for fiber buched $GL(d,\R)$-cocycles.



\begin{question}
Let $(T,A)$ be a $GL(d,\R)$-cocycle defined over a transitive subshift of finite type or a transitive Anosov diffeomorphism. Let us suppose that the cocycle $(T,A)$ satisfies the fiber-bunching condition. If there is a constant $\tau>0$ and a full probability set $\mathcal{S}\subset \mathcal{R}$ such that $$\lambda_i(x)- \lambda_{i+1}(x)\ge \tau \quad \textrm{for all $x \in \mathcal{S}$}.$$ Does the cocycle $(T,A)$ have a dominated splitting of index $i$? 
\end{question}

Finally, it would be interesting to prove the existence of $SL(2,\R)$-cocycles with the properties of the example given in Section \ref{example} which almost satisfy the fiber-bunching inequality. More precisely, we would like to prove the following.

\begin{question} \label{question2}
Let $T$ be the left-shift map $T:\{1,2,\dots l\}^{\Z} \to \{1,2,\dots l\}^{\Z}$ defined by $T((x_n)_{n\in \Z})=(x_{n+1})_{n\in \Z}$. Let us consider the metric as before in the Preliminaries. Let $c>1$ be an arbitrary positive constant. Is there an $\alpha$-H\"{o}lder $SL(2,\R)$-cocycle $(T,A)$ which is not uniformly hyperbolic, but there is a constant $\epsilon>0$ such that $\lambda_+(x)>\epsilon$ for every point $x$ in a set $\mathcal{S}$ of full probability, and also $\|A(x)\|^2\cdot 2^{-\alpha}<c \text{?}$

\end{question}

\begin{remark}
If $c$ were less than 1, this would mean that the cocycle $(T,A)$ is fiber-bunched. Our construction works for any positive constant $\alpha$ less than 1 and $\|A(x)\|=2$ for every $x\in X$. Hence, it does not satisfy the requirements of Question $\ref{question2}$.
\end{remark}

\end{document}